\DeclareMathOperator*{\argmin}{argmin}
\DeclareMathOperator*{\argmax}{argmax}
\newtheorem{assumption}{Assumption}
\newcounter{RomanNumber}
\newcommand{\MyRoman}[1]{\setcounter{RomanNumber}{#1}\Roman{RomanNumber}}
\begin{document}

\title{Dual-density-based reweighted $\ell_{1}$-algorithms for a class of $\ell_{0}$-minimization problems
\thanks{The work was supported by the National Natural Science Foundation of China under the grants 12071307 and  11771003.}
}
\author{Jia-Liang Xu         \and
        Yun-Bin Zhao 
}


\institute{Jia-Liang Xu \at
             Hua Loo-Keng Center for Mathematical Sciences, Academy of Mathematics and Systems Science, Chinese Academy of Sciences, Beijing,
Beijing 100190, China \\
              \email{xujialiang@lsec.cc.ac.cn}           
           \and
           Yun-Bin Zhao \at
              Shenzhen Research Institute of Big Data, Chinese University of Hong Kong, Shenzhen, Guangdong, China. On leave from the University of Birmingham,
Birmingham B15 2TT, United Kingdom\\
              \email{yunbinzhao@cuhk.edu.cn; y.zhao.2@bham.ac.uk}
}

\date{Received: 02 October 2019 / Accepted: date}

\maketitle

\begin{abstract}
The optimization problem with sparsity arises in many areas of science and engineering such as compressed sensing, image processing, statistical learning and data sparse approximation. In this paper, we study the dual-density-based reweighted $\ell_{1}$-algorithms for a  class of $\ell_{0}$-minimization models which can be used to model a wide range of practical problems. This class of algorithms is  based on certain convex relaxations of the reformulation of the underlying $\ell_{0}$-minimization model. Such a reformulation is a special bilevel optimization problem which, in theory,  is equivalent to the underlying $\ell_{0}$-minimization problem under the assumption of strict complementarity. Some basic properties of these algorithms are discussed, and numerical experiments have been carried out   to demonstrate the efficiency  of the proposed  algorithms. Comparison of  numerical performances of the proposed methods and the classic reweighted $\ell_1$-algorithms has also been made in this paper.

\keywords{Merit functions for sparsity \and $\ell_{0}$-minimization \and Dual-density-based  algorithm \and Strict complementarity \and   Bilevel optimization \and Convex relaxation}
\end{abstract}

\section{Introduction}
Let $\left \| x \right \|_{0}$ denote the number of nonzero components of the vector $x. $   We consider the    $\ell_{0}$-minimization problem
\begin{equation}\label{Pnew}
\begin{array}{lcl}
  &\min\limits_{x\in R^{n}}&\left \| x \right \|_{0}\\
& $s.t.$ & \left \| y-Ax \right \|_{2}\leq \epsilon,~ Bx\leq b,
\end{array}
\end{equation}
 where $A\in R^{m\times n}$ and $B\in R^{l\times n}$ are two matrices with $m\ll n$ and  $l\leq n$, $y\in R^{m}$ and $b\in R^{l}$ are two given vectors, and $\epsilon\geq 0$ is a given parameter,  and $\left \| x \right \|_{2}=(\sum_{i=1}^{n}\left| x_{i}\right|^{2})^{1/2}$ is the $\ell_{2}$-norm of $x$.
 In compressed sensing (CS),  the parameter $\epsilon$ denotes  the level of the measurement error $\eta=y-Ax$.  Clearly,  the problem \eqref{Pnew} is to find the sparsest point in the convex set
 \begin{equation}\label{feasible set T}
T=\lbrace x:  ~  \left\| y-Ax\right\|_{2}\leq \epsilon, Bx\leq b\rbrace.
\end{equation}
The constraint $Bx\leq b$  is motivated by some practical applications. For instance, many signal recovery models  might  include  extra constraints   reflecting certain  special structures or prior information of the target signals. The model \eqref{Pnew} is general enough to  cover several important applications in compressed sensing \cite{D06,C06,CERT2006,DDEK11}, 1-bit compressed sensing \cite{gupta2010,laska2011,zhaobook2018} and statistical regression \cite{tibs2007,hoef2010,liu2010,rinaldo2009}. The following  two  models   are clearly  the special cases of \eqref{Pnew}:
 \begin{equation*}
\begin{array}{ll}
$(C1)$~\min\limits_{x} \lbrace \Vert x\Vert_{0}: ~ y=Ax\rbrace; & $(C2)$~ \min\limits_{x} \lbrace  \Vert x\Vert_{0}: ~  \left\| y-Ax\right\|_{2}\leq \epsilon\rbrace.
\end{array}
\end{equation*}
The problem (C1) is often called the standard $\ell_{0}$-minimization problem \cite{Redbook,candes2005,zhaobook2018}.
Some structured sparsity models, including the nonnegative sparsity model  \cite{candes2005,CERT2006,Redbook,zhaobook2018} and the monotonic sparsity model  (isotonic regression) \cite{greenbook},  are also  the special cases of the model \eqref{Pnew}.

Clearly, directly solving the problem \eqref{Pnew} is generally very difficult since the $\ell_{0}$-norm is a nonlinear, nonconvex  and discrete function.   Moreover, due to the analysis in \cite{xu2020}, the problem \eqref{Pnew} might have infinitely many optimal solutions so that it is needed to develop some efficient algorithms to solve the problem \eqref{Pnew}.  Some   algorithms  have been developed for some special cases of the problem  such as (C1) and (C2) over the past decade, including  convex optimization and heuristic methods \cite{DDEK11,eladbook2010,Redbook,zhaobook2018}.  For instance, by replacing the $\ell_{0}$-norm in problem \eqref{Pnew} with the $\ell_{1}$-norm,  we immediately obtain the $\ell_{1}$-minimization problem
 \begin{equation}\label{Pnew1}
\begin{array}{lll}
 &\min\limits_x &\lbrace \left \| x \right \|_{1}: ~  x\in T\rbrace.
\end{array}
\end{equation}
 A more efficient class of models than (\ref{Pnew1}) is the so-called weighted $\ell_{1}$-minimization model  \cite{reweighted1995,CWB2008,zhaoli2012,zhaobook2018}.  For  (C1) and (C2), the reweighted $\ell_1$-minimization model can be stated respectively as
\begin{equation*}
\begin{array}{ll}
$(E1)$~\min\limits_{x} \lbrace \Vert Wx\Vert_{1}: ~ y=Ax\rbrace; & $(E2)$~ \min\limits_{x} \lbrace\Vert Wx\Vert_{1}:~  \left\| y-Ax\right\|_{2}\leq \epsilon\rbrace,
\end{array}
\end{equation*}
where $W=\mathrm{diag}(w)$ is a diagonal matrix with $w\in R_{+}^{n}$ being a weight vector.  A single weighted $\ell_1$-minimization is not efficient enough to outperform the standard $\ell_1$-minimization. As a result,  the reweighted $\ell_{1}$-algorithm  has been developed, which consists of solving a series of individual weighted $\ell_{1}$-minimization  problems \cite{asif2013,asif2014,CWB2008,reweighted1995,zhaoli2012,zhaobook2018}. Taking  (C1)  as an example, this method   solves a series of the following reweighted $\ell_{1}$-problems:
$$\min\limits_{x} \lbrace  (w^{k})^{T}\vert x\vert: ~  y=Ax\rbrace,$$
where $k$ denotes the $k$th iteration and the weight $w^{k}$  is updated by a certain rule. For example, the first-order method  would yield  a good  updating scheme for $w^{k}.$ The convergence of some reweighted algorithms was shown under certain conditions \cite{cz2010,laiwang2011,zhaoli2012,zhaobook2018}.  The reweighted $\ell_{1}$-minimization may perform  better    than  $\ell_{1}$-minimization  on   sparse signal recovery when the initial point is suitably chosen (see, e.g.,  \cite{CWB2008,FSLM2009,laiwang2011,cz2010,zhaoli2012,zhaobook2018}).
Although this paper focuses on the study of reweighted algorithms, it is worth mentioning that there exist other types of  algorithms for $\ell_0$-minimization problems, which have also been widely studied in the CS literature, such as orthogonal matching pursuits  \cite{OMP1993,OMP2003,eladbook2010}, compressed sampling matching pursuits  \cite{Redbook,needell09cosamp}, subspace pursuits \cite{Redbook,DM09},   thresholding algorithms \cite{IHT2004,eladbook2010,Redbook,IHT2012,MZ20}, and the newly developed optimal $k$-thresholding algorithms \cite{Zhao19}.

 Recently, a new framework of reweighted algorithms for   sparse optimization problems was proposed in \cite{zhaom2015,zhao2016,zhaobook2018} which is derived from the  perspective of the dual density.  The  key idea is to use the complementarity between the solutions of the $\ell_0$-minimization and theoretically equivalent weighted $\ell_1$-minimization problem.  Such complementarity property makes it possible to reformulate the $\ell_0$-minimization problem as an equivalent bilevel optimization which seeks the densest solution of the dual problem of a  weighted $\ell_{1}$-problem (see \cite{zhaobook2018} for details). In this paper, we generalize this idea to the  $\ell_{0}$-minimization problem (\ref{Pnew}) and develop new dual-density-based algorithms through convex relaxation of the bilevel optimization. More specifically, to possibly solve the model \eqref{Pnew}, we consider the   problem
\begin{equation}\label{PnewW}
\begin{array}{lll}
 &\min\limits_x &\lbrace \left \| Wx \right \|_{1}=w^{T}\vert x\vert: x\in T\rbrace,
\end{array}
\end{equation}
which is  the  weighted $\ell_{1}$-minimization problem associated with the   problem \eqref{Pnew} for a given weight $w\in R^n_+$ $(W=\mathrm{diag}(w))$.
The dual-density-based reweighted $\ell_{1}$-algorithms for \eqref{Pnew}  are directly derived from the relaxation of the bilevel optimization reformulation of the problem \eqref{Pnew}.  To this goal, we develop a sufficient condition  for the strict complementarity of the solutions of weighted $\ell_1$-minimization problem associated with the problem \eqref{Pnew} and the solutions of its dual problem.  We propose three types of convex relaxations  of the bilevel optimization problem in order to develop our dual-density-based $\ell_1$-algorithms for the  problem \eqref{Pnew}.

The paper is organized as follows. In Sect. \ref{merit function},  we recall the  merit functions for sparsity and  give a few examples of such functions, and we introduce  the classic reweighted $\ell_{1}$-algorithms.   Sect. \ref{chapter weight 1} is denoted  to the development of  a sufficient condition for the strict complementarity property to hold. In Sect. \ref{chapter weight basis}, we show that  the $\ell_0$-problem \eqref{Pnew} can be reformulated equivalently as a bilevel optimization problem which, in theory, can generate  an optimal weight for weighted $\ell_1$-minimization problems.  In Sect. \ref{Dual algorithm}, we discuss several new relaxation strategies for such a bilevel optimization problem, based on which we develop the dual-density-based reweighted $\ell_{1}$-algorithms for the problem \eqref{Pnew}. Finally, we demonstrate some numerical results for the proposed algorithms.

$\bf{Notation:}$ The $\ell_{p}$-norm on $R^{n}$ is defined  as $\left\| x\right\|_{p}=(\sum_{i=1}^{n}\left| x_{i}\right|^{p})^{1/p}$, where $p\geq 1$.   The $n$-dimensional Euclidean space is denoted by $R^{n}$.  $R^{n}_{+}$ and  $R^{n}_{++}$ are the sets of nonnegative and positive vectors respectively. The set of $m \times n$ matrices is denoted by $R^{m\times n}$. The identity matrix of a suitable size is denoted by $I$.
The complementary set of $S\subseteq \left\{ 1,...,n\right\}$ is denoted by $\bar{S}$, i.e., $\bar{S}=\lbrace 1,...,n\rbrace \setminus S$.
 For a given vector $x\in R^{n}$ and $S\subseteq \left\{ 1,...,n\right\}, $    $x_{S} $ is the subvector of $x$ supported on $S.$

 \section{Preliminary}\label{merit function}
In this section, we recall the notion of merit functions for sparsity and list a few such examples. We also briefly outline the classic reweighted $\ell_{1}$-methods for the problem \eqref{Pnew}.  A function is called a merit function for sparsity if it can approximate the $\ell_{0}$-norm  in some senses \cite{zhaobook2018,zhaoli2012}.  Some concave functions are shown to  be the good candidates for the merit functions for sparsity \cite{harikumar1996,CWB2008,zhaoli2012,zhaom2015,zhaobook2018}.
 As pointed out in  \cite{zhaoli2012,zhao2016}, we  may choose  a family of  merit functions in the form
\begin{equation*}\label{add3}
\mathrm{\Psi}_{\varepsilon}( s)=\sum\limits_{i=1}^{n}\varphi_{\varepsilon}( s_{i}),~s\in R^{n}_{+},
\end{equation*} where $ \varphi_{\varepsilon} $ is a function from $R_+ $ to $ R_+.$
$\mathrm{\Psi}_{\varepsilon}( s)$ satisfies  the following properties:
\begin{itemize}[label=$\bullet$]
\item ($P1$) for any given $s\in R_{+}^{n}$, $\mathrm{\Psi}_{\varepsilon}( s)$ tends to $\left\| s\right\|_{0}$ as $\varepsilon$ tends to $0$;
\item($P2$)  $\mathrm{\Psi}_{\varepsilon}( s)$  is twice continuously differentiable   with respect to  $s \in R^n_{+}$ in the open neighborhood of $R^n_{+}; $
\item{($P3$)} $\varphi_{\varepsilon}( s_i)$ is  concave and   strictly increasing with respect to every $s_{i}\in R_{+}$.

\end{itemize}
We denote the set of such merit functions by
 $$\textbf{F}=\lbrace \mathrm{\Psi}_{\varepsilon}:  \mathrm{\Psi}_{\varepsilon} ~  \mathrm{ satisfies }  ~  (P1),(P2)~ \mathrm{and}~(P3) \rbrace. $$
   The following  merit functions  satisfying  $(P1)$-$(P3)$ have been used  in \cite{zhaoli2012,zhao2016}:
\begin{eqnarray}
\mathrm{\Psi}_{\varepsilon}( s) & = & n-\frac{\sum_{i=1}^{n}\log( s_{i}+\varepsilon)}{\log\varepsilon},~s\in R_{+}^{n},     \label{merit function eq4}     \\
\mathrm{\Psi}_{\varepsilon}( s) & =  & \sum_{i=1}^{n}\frac{s_{i}}{ s_{i}+\varepsilon},~s\in R_{+}^{n},      \label{merit function eq1} \\
\mathrm{\Psi}_{\varepsilon}( s) & =  & \sum_{i=1}^{n}( s_{i}+\varepsilon^{1/\varepsilon})^{\varepsilon},~s\in R_{+}^{n}  \label{merit function eq2}
\end{eqnarray}
 where $\varepsilon\in (0,1)$. In this paper, we also consider the following merit function:
\begin{equation}\label{merit function eq3}
\mathrm{\Psi}_{\varepsilon}( s)=\frac{2}{\pi}\sum_{i=1}^{n}\arctan(\frac{ s_{i}}{\varepsilon}), ~s\in R_{+}^{n},
\end{equation}
where $\varepsilon>0$. It is easy to show that \eqref{merit function eq3}   belongs to the set $\textbf{F}$.
\begin{lemma}\label{lemma merit}
The function \eqref{merit function eq3}  satisfies $(P1)$-$(P3)$ on $R^{n}_{+}$.
\end{lemma}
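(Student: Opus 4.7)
The plan is to verify each of the three properties in turn by direct computation on $\varphi_{\varepsilon}(t) = \frac{2}{\pi}\arctan(t/\varepsilon)$ and then lifting the one-variable analysis to the sum $\mathrm{\Psi}_{\varepsilon}$.

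For (P1), I would split the coordinates of a fixed $s \in R^n_{+}$ according to whether $s_i = 0$ or $s_i > 0$. On the first set, $\varphi_{\varepsilon}(0) = 0$ for every $\varepsilon > 0$, so those terms contribute nothing. On the second set, $s_i/\varepsilon \to +\infty$ as $\varepsilon \to 0^{+}$, and since $\arctan(u) \to \pi/2$ as $u \to +\infty$, each such term tends to $1$. Summing gives exactly the number of strictly positive entries of $s$, which, because $s \in R^n_{+}$, equals $\|s\|_0$.

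For (P2), I would note that the map $s_i \mapsto s_i/\varepsilon$ is smooth on $R$ and $\arctan$ is $C^{\infty}$ on $R$, so $\varphi_{\varepsilon}$ is $C^{\infty}$ on all of $R$, and hence $\mathrm{\Psi}_{\varepsilon}$ is $C^{\infty}$ on $R^n$; restricting to any open neighborhood of $R^n_{+}$ gives $C^2$ in particular.

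For (P3), I would compute the first two derivatives explicitly:
\begin{equation*}
\varphi_{\varepsilon}'(s_i) = \frac{2}{\pi}\cdot\frac{\varepsilon}{\varepsilon^2 + s_i^2}, \qquad \varphi_{\varepsilon}''(s_i) = -\frac{2}{\pi}\cdot\frac{2\varepsilon s_i}{(\varepsilon^2 + s_i^2)^2}.
\end{equation*}
The first is strictly positive on $R_{+}$, giving strict monotonicity, while the second is nonpositive on $R_{+}$, giving concavity of each $\varphi_{\varepsilon}(s_i)$ in $s_i$.

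None of the three steps present a real obstacle; the only point that requires any care is (P1), where one must separate the zero and nonzero coordinates of $s$ before passing to the limit, since the pointwise limit of $\varphi_{\varepsilon}$ is not continuous at $0$. Everything else reduces to elementary properties of $\arctan$.
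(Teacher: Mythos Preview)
Your proposal is correct and follows essentially the same approach as the paper: both arguments verify (P3) by computing the same first and second derivatives of $\varphi_{\varepsilon}$ and reading off the sign. The only difference is that the paper dismisses (P1) and (P2) as obvious, whereas you spell them out, which is a harmless expansion rather than a different route.
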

\begin{proof}
Obviously, the function \eqref{merit function eq3} satisfies $(P1)$ and $(P2)$. We now prove that it also satisfies $(P3)$. In $R^{n}_{+}$, note that
$$\nabla\mathrm{\Psi}_{\varepsilon}( s)=\left( \nabla \varphi_{\varepsilon}( s_{1}), \ldots , \nabla \varphi_{\varepsilon}( s_{n})\right)^{T} =\frac{2}{\pi}\biggr( \frac{\varepsilon}{s_{1}^{2}+\varepsilon^{2}}, \ldots, \frac{\varepsilon}{s_{n}^{2}+\varepsilon^{2}}\biggr)^{T},$$
and
$$\nabla^{2} \mathrm{\Psi}_{\varepsilon}( s)=\frac{4}{\pi} \mathrm{diag}\biggr( -\frac{\varepsilon s_{1}}{(s_{1}^{2}+\varepsilon^{2})^{2}}, \ldots ,-\frac{\varepsilon s_{n}}{(s_{n}^{2}+\varepsilon^{2})^{2}}\biggr).$$
Due to $s_{i}\geq 0~\mathrm{and}~\varepsilon>0$, we have
$\nabla \varphi_{\varepsilon}( s_{i})>0$ and $\nabla^{2} \varphi_{ \varepsilon}(s_{i})\leq 0$ for $i=1,...,n$  which implies that $\mathrm{\Psi}_{\varepsilon}(s)$ is concave and strictly  increasing with respect to every entry of $s\in R_{+}^{n}$. Thus \eqref{merit function eq3} satisfies $(P1),(P2)$ and $(P3)$.
\end{proof}
In order to compare the  algorithms proposed in later sections, we briefly introduce the classic reweighted $\ell_1$-method.
Following the idea in \cite{zhaoli2012} and \cite{zhaobook2018},   replacing $\left\| x\right\|_{0}$ with $\mathrm{\Psi}_{\varepsilon}(t)\in \textbf{F}$ leads to   the following approximation  of the problem \eqref{Pnew}:
\begin{equation}\label{relaxation Pnew2}
\min_{(x, t)} \lbrace \mathrm{\Psi}_{\varepsilon}(t):    x\in T, ~\vert x\vert\leq t\rbrace.
\end{equation}
By using the first order approximation of $\mathrm{\Psi}_{\varepsilon}(t)\in \textbf{F}$ at the point $t^{k},$  the problem
 \eqref{relaxation Pnew2} can be approximated by the  optimization
 \begin{equation}\label{relaxation Pnew3}
\min_{(x, t)} \lbrace \nabla \mathrm{\Psi}_{\varepsilon}^{T}(t^{k})t:    x\in T, ~\vert x\vert\leq t\rbrace,
\end{equation}
which  is used to generate the new iterate $(x^{k+1}, t^{k+1}). $
Due to the fact that $\mathrm{\Psi}_{\varepsilon}(t)$ is strictly increasing with respect to each $t_{i}\in R_{+},$  it is evident that the iterate $(x^{k}, t^{k})$ must satisfy $t^{k}=\vert x^{k}\vert$, which implies that
\begin{equation*}\label{relaxation Pnew5}
x^{k+1}\in \argmin_{x} \lbrace \nabla \mathrm{\Psi}_{\varepsilon}^{T}(\vert x^{k}\vert)\vert x\vert: x\in T\rbrace.
\end{equation*}
This is the classic reweighted $\ell_1$-minimization method described in \cite{zhaobook2018}.
 \begin{algorithm}[H]
\caption{ Reweighted  $\ell_{1}$-algorithm (\textbf{RA})}
 \begin{algorithmic}[4]\label{algorithm unified 1}
 \REQUIRE~~\\
 merit function $\mathrm{\Psi}_{\varepsilon}\in \textbf{F}$, matrices $A\in R^{m\times n}$ and $B\in R^{l\times n}$;\\
  vectors $y\in R^{m}$, $b\in R^{l}$, and small positive parameters $(\varepsilon,\epsilon)\in R^{2}_{++}$;\\
     initial weight $w^{0}$,  the iteration index $k$ and the largest number of iterations $k_{\max}$.\\
\ENSURE ~~
At the current iterate $x^{k-1}$, solve the weighted $\ell_{1}$-minimization $$x^{k}\in \argmin \biggr\lbrace\sum\limits_{i=1}^{n}w_{i}^{k} \vert x_{i}\vert:x\in T\biggr\rbrace,$$\\
where $w_{i}^{k} = (\nabla \mathrm{\Psi}_{\varepsilon}(\vert x^{k-1}\vert))_{i}= \nabla \varphi_{\varepsilon}(\vert x^{k-1}_{i}\vert),~i=1,...,n.$
\UPDATE ~~   $w_{i}^{k+1}:=(\nabla \mathrm{\Psi}_{\varepsilon}(\vert x^{k}\vert))_{i}=\nabla \varphi_{\varepsilon}(\vert x^{k}_{i}\vert)$, $i=1,...,n$;
Repeat the above main step until $k=k_{\max}$ (or certain other stopping criterion is met).
\end{algorithmic}
\end{algorithm}
Based on  the generic convergence of revised Frank-Wolfe algorithms ($FW$-$RD$)  for a class of concave functions in \cite{rinaldi2011}, the generic convergence of the algorithm RA can be obtained (see details in \cite{rinaldi2011}), that is, there exists a family of merit functions $\mathrm{\Psi}_{\varepsilon}\in \textbf{F}$  such that  RA  converges to a stationary point of the problem. The convergence of RA to a sparse point in the case of linear-system constraints can be found in \cite{zhaobook2018}.

\section{Duality, strict complementarity and optimality condition}\label{chapter weight 1}
To develop the dual-density-based reweighted $\ell_{1}$-algorithms, we   first  discuss   the duality and the optimality condition  of the  model \eqref{PnewW}, and we  give a sufficient condition for  the strict complementarity to satisfy for  the model \eqref{PnewW}.
\subsection{Duality and complementary condition}

By introducing two variables $t\in R^{n}$ and $\gamma\in R^{m}$ such that $$\vert x \vert\leq t ~\mathrm{and} ~\gamma=y-Ax,$$ we can rewrite \eqref{PnewW} as the following  problem:
\begin{equation}\label{PnewW1}
\begin{array}{lcl}
  & \min\limits_{(x, \gamma, t)} & w^T t\\
& \mathrm{s.t.} & \left \|\gamma \right \|_{2}\leq \epsilon, ~ Bx\leq b,\\
& &\gamma=y-Ax,~ \left | x \right | \leq t,~ t\geq 0.
\end{array}
\end{equation}
Obviously, \eqref{PnewW1} is equivalent to \eqref{PnewW}. Additionally,  if $w\in R^{n}_{++}$, then the solution $(x^{*}, t^{*}, \gamma^{*})$ to \eqref{PnewW1} must satisfy that  $\vert x^{*} \vert= t^{*}$ and $\gamma^{*}=y-Ax^{*}$, and the following relation of the solutions of \eqref{PnewW} and \eqref{PnewW1} is obvious.
\begin{lemma}
If $x^{*}$ is optimal to the problem \eqref{PnewW}, then all vectors $(x^{*}, t^{*}, \gamma^{*})$ satisfying
$$\vert x^{*}_{\mathrm{supp}(w)}\vert=t^{*}_{\mathrm{supp}(w)},~\vert x^{*}_{\overline{\mathrm{supp}(w)}}\vert\leq t^{*}_{\overline{\mathrm{supp}(w)}}~~\mathrm{and}~~ \gamma^{*}=y-Ax^{*}$$
are optimal to the problem \eqref{PnewW1}.
Moreover, if $(\bar{x}, \bar{t}, \bar{\gamma})$ is optimal to the problem \eqref{PnewW1}, then $\bar{x}$ is optimal to the problem \eqref{PnewW}.
\end{lemma}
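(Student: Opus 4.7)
The plan is to exploit the natural lifting/projection between feasible sets of the two problems together with the monotonicity of the objective $w^T t$ in $t$. The key observation is that, because $w \geq 0$, the constraint $\vert x\vert \leq t$ yields $w^T t \geq w^T \vert x\vert$ for every feasible triple $(x, t, \gamma)$ of \eqref{PnewW1}, with equality attainable by choosing $t_i = \vert x_i\vert$ whenever $w_i > 0$; the entries of $t$ on the complement of $\mathrm{supp}(w)$ contribute nothing to the objective and are only required to satisfy $t_i \geq \vert x_i\vert$ (hence in particular $t_i\geq 0$). Conversely, for any $x \in T$, the lifted point $(x, \vert x\vert, y - Ax)$ is feasible in \eqref{PnewW1} with objective $w^T \vert x\vert$. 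Both implications of the lemma will reduce to these two remarks.

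For the forward direction, I would fix $x^*$ optimal in \eqref{PnewW} and take any $(x^*, t^*, \gamma^*)$ satisfying the support-wise relations stated in the lemma. First I verify feasibility in \eqref{PnewW1}: $\gamma^* = y - Ax^*$ holds by construction, $\Vert \gamma^*\Vert_2 \leq \epsilon$ and $Bx^* \leq b$ follow from $x^* \in T$, and $\vert x^*\vert \leq t^*$ (hence $t^*\geq 0$) is exactly the content of the two support-wise hypotheses. Then I compute the objective value
\[
w^T t^* = \sum_{i \in \mathrm{supp}(w)} w_i t^*_i = \sum_{i \in \mathrm{supp}(w)} w_i \vert x^*_i\vert = w^T \vert x^*\vert.
\]
For any competing feasible $(x, t, \gamma)$ of \eqref{PnewW1}, the key observation gives $w^T t \geq w^T \vert x\vert$; since $x \in T$ and $x^*$ is optimal for \eqref{PnewW}, this chains to $w^T t \geq w^T \vert x^*\vert = w^T t^*$, establishing optimality of $(x^*, t^*, \gamma^*)$ in \eqref{PnewW1}.

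For the reverse direction, an optimal $(\bar{x}, \bar{t}, \bar{\gamma})$ of \eqref{PnewW1} has $\bar{x}$ automatically feasible in \eqref{PnewW} by inspecting the constraints. To establish optimality, I pick an arbitrary competitor $x \in T$, form the lifted triple $(x, \vert x\vert, y - Ax)$, which is feasible in \eqref{PnewW1} with objective $w^T \vert x\vert$, and use optimality of $(\bar{x}, \bar{t}, \bar{\gamma})$ to deduce $w^T \bar{t} \leq w^T \vert x\vert$. Combined with $w^T \vert \bar{x}\vert \leq w^T \bar{t}$ (the key observation applied to $\bar{x}$), this yields $w^T \vert \bar{x}\vert \leq w^T \vert x\vert$, as required.

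The argument is essentially bookkeeping, so there is no real obstacle; the only subtlety worth emphasizing is that when $w$ has zero entries the auxiliary vector $t^*$ in the forward direction is not uniquely determined, which is precisely why the lemma is phrased in terms of $\mathrm{supp}(w)$ rather than asserting $t^* = \vert x^*\vert$ outright.
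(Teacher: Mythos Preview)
Your proof is correct and is exactly the natural argument; the paper itself does not supply a proof, stating only that the relation between the solutions of \eqref{PnewW} and \eqref{PnewW1} is ``obvious'' and relying on the remark immediately preceding the lemma that $\vert x^*\vert = t^*$ when $w\in R^n_{++}$. Your treatment simply makes this explicit and correctly handles the additional freedom on $\overline{\mathrm{supp}(w)}$.
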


Let $\lambda=(\lambda_{1},...,\lambda_{6})$ be the dual variable, then the dual problem of (\ref{PnewW1})  can be stated as follows:
\begin{equation}\label{PdW1}
\begin{array}{lcl}
  & \max\limits_{\lambda} & -\lambda_{1}\epsilon-\lambda_{2}^{T}b+\lambda_{3}^{T}y\\
& \mathrm{s.t.} & B^{T}\lambda_{2}-A^{T}\lambda_{3}+\lambda_{4}-\lambda_{5}=0,\\
& &w=\lambda_{4}+\lambda_{5}+\lambda_{6}, ~\left \|\lambda_{3}\right\|_{2}\leq \lambda_{1},\\
& & \lambda_{i}\geq 0,~ i=1,2,4,5,6,.
\end{array}
\end{equation}
The strong duality  between \eqref{PnewW1} and \eqref{PdW1} can be guaranteed under suitable condition. Thus the following results follows from the classic optimization theory \cite{boyd2004book}.
\begin{lemma}
\label{Slater 2}
Let  Slater condition  hold for the convex problem \eqref{PnewW1}, i.e., there exists $(x^{*}, \gamma^{*}, t^{*})\in ri(T)$  such that $$\left\| \gamma^{*}\right\|_{2}< \epsilon, ~Bx^{*}\leq b, ~\vert x^{*}\vert\leq t^{*}, ~y=Ax^{*}+\gamma^{*},~ t^{*}\geq 0,$$
where $ri(T)$ is the relative interior of $T$.
 Then there is no duality gap between \eqref{PnewW1} and its dual problem \eqref{PdW1}. Moreover, if the optimal value of \eqref{PnewW1} is finite, then  there exists at least one optimal Lagrangian multiplier such that the dual optimal value can be  attained.
\end{lemma}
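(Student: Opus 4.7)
The plan is to verify that problem \eqref{PnewW1} fits the hypotheses of the standard strong duality theorem for convex programs (see, e.g., Section 5.2.3 of \cite{boyd2004book}) and then simply quote that theorem. First I would observe that \eqref{PnewW1} is a convex program: the objective $w^T t$ is linear, the constraint $\|\gamma\|_2 \leq \epsilon$ defines a closed convex cone section, $Bx \leq b$, $|x| \leq t$ (which splits as the two linear systems $x \leq t$ and $-x \leq t$), and $t \geq 0$ are all linear (hence convex) inequalities, and $\gamma = y - Ax$ is an affine equality. Moreover, since every inequality constraint in the reformulated problem is either linear or has the form $\|\gamma\|_2 - \epsilon \leq 0$ with $\|\cdot\|_2$ convex, the classical (weak) Slater condition may be stated in the form given in the lemma, namely strict feasibility of the nonlinear inequality together with (ordinary) feasibility of the affine/linear pieces at a relative-interior point.

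Second, I would invoke the standard strong duality result: for a convex program satisfying Slater's condition, the optimal duality gap is zero, and if the primal optimal value is finite then the dual supremum is attained, i.e., there exists an optimal Lagrange multiplier. Applied to \eqref{PnewW1}, this immediately produces no duality gap with \eqref{PdW1} and, under finiteness of the primal value, the existence of at least one $\lambda^* = (\lambda_1^*,\dots,\lambda_6^*)$ attaining the dual optimum.

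The only subtlety, and the step I would be most careful about, is correctly lining up the Lagrangian of \eqref{PnewW1} with the dual problem \eqref{PdW1} to make sure that strong duality in the textbook sense literally translates into the claim for the pair (\eqref{PnewW1}, \eqref{PdW1}). Concretely, I would form
\begin{equation*}
L(x,\gamma,t;\lambda) = w^T t + \lambda_1(\|\gamma\|_2-\epsilon) + \lambda_2^T(Bx-b) + \lambda_3^T(y-Ax-\gamma) + \lambda_4^T(x-t) + \lambda_5^T(-x-t) - \lambda_6^T t,
\end{equation*}
minimize over $(x,\gamma,t)$, and verify that the resulting dual function is finite exactly when the constraints in \eqref{PdW1} hold (the $\|\lambda_3\|_2 \leq \lambda_1$ piece coming from the infimum over $\gamma$ of $\lambda_1\|\gamma\|_2 - \lambda_3^T\gamma$, which is $-\infty$ unless $\|\lambda_3\|_2 \leq \lambda_1$). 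Once that identification is made, the conclusion is immediate from \cite{boyd2004book}.

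The main obstacle is therefore purely bookkeeping — matching signs and dual variables in the Lagrangian with the exact form of \eqref{PdW1} — rather than any genuine mathematical difficulty, since the heavy lifting (strong duality under Slater) is entirely classical. No separation argument or refined analysis is needed here.
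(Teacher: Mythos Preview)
Your proposal is correct and matches the paper's approach: the paper does not give a proof at all but simply states that the lemma ``follows from the classic optimization theory \cite{boyd2004book}.'' Your write-up actually supplies more detail than the paper does, in particular the Lagrangian bookkeeping identifying \eqref{PdW1} as the dual, which the paper omits.
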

In this paper, we assume that  Slater condition holds for \eqref{PnewW1}. Clearly, the optimal value of \eqref{PnewW1} is finite when $w$ is a given  vector, and hence the  strong duality holds for \eqref{PnewW1} and  \eqref{PdW1} and the dual optimal value can be attained.  Actually, the set $\mathrm{\Omega}=\lbrace x: Ax=y, ~Bx\leq b\rbrace$ is in practice not empty due to the fact that $y$ and $b$ are the measurements of the signals. Thus  Slater condition is a very mild sufficient condition for strong duality to hold for the problems  \eqref{PnewW1} and \eqref{PdW1}.
\subsection{Optimality condition for (\ref{PnewW1}) and (\ref{PdW1}) }\label{subsection optimality condition}
It is well-known that for any convex minimization problem with differentiable objective and constraint functions for which the strong duality holds,   Karush-Kuhn-Tucker (KKT) condition is the necessary and sufficient optimality condition  for the  problem and its dual problem \cite{boyd2004book}. Since  Slater condition holds for \eqref{PnewW1}, by Lemma \ref{Slater 2},  the optimality condition for \eqref{PnewW1} is stated as follows.
\begin{theorem}\label{KKT for Pnew}
If Slater condition holds for  \eqref{PnewW1}, then $(x^{*}, \gamma^{*}, t^{*})$ is optimal to \eqref{PnewW1} and $\lambda_{i}^{*}, i=1,...,6$ is optimal to \eqref{PdW1}  if and only if $(x^{*}, \gamma^{*}, t^{*}, \lambda^{*})$ satisfies the KKT conditions for \eqref{PnewW1}, i.e.,
\begin{equation}\label{cs1}
\left\{\begin{array}{lll}
\gamma^{*}=y-Ax^{*},~ \left\| \gamma^{*}\right\|_{2}\leq \epsilon, ~ x^{*}\leq t^{*}, ~ -t^{*}\leq x^{*}, \\
 Bx^{*}\leq b, ~ t^{*}\geq 0,~ \lambda_{i}^{*}\geq 0, ~i=1,2,4,5,6, \\
  \lambda^{*}_{1}(\epsilon-\left \|\gamma^{*} \right \|_{2})=0,  ~\lambda_{2}^{*T}(b-Bx^{*})=0,  \\
 \lambda_{4}^{*T}(t^{*}-x^{*})=0,  ~\lambda_{5}^{*T}(x^{*}+t^{*})=0, ~\lambda_{6}^{*T}t^{*}=0, \\
  \partial_{x} L(x^{*}, \gamma^{*}, t^{*}, \lambda^{*})=B^{T}\lambda_{2}^{*}-A^{T}\lambda_{3}^{*}+\lambda_{4}^{*}-\lambda_{5}^{*}=0,\\
 \partial_{\gamma} L(x^{*}, \gamma^{*}, t^{*},\lambda^{*})=(\lambda^{*}_{1})\nabla(\left\| \gamma^{*}\right\|_{2})-\lambda^{*}_{3}=0,\\
 \partial_{t} L(x^{*}, \gamma^{*}, t^{*},\lambda^{*})=w-\lambda_{4}^{*}-\lambda_{5}^{*}-\lambda_{6}^{*}=0.\\
\end{array} \right.
\end{equation}
where  $L(x^{*}, \gamma^{*}, t^{*}, \lambda^{*})=w^{T}t^{*}-\lambda^{*}_{1}(\epsilon-\left \|\gamma^{*} \right \|_{2})-
 \lambda_{2}^{*T}(b-Bx^{*}) -\lambda_{3}^{*T}(Ax^{*}+\gamma^{*}-y) -\lambda_{4}^{*T}(t^{*}-x^{*})-\lambda_{5}^{*T}(x^{*}+t^{*})-\lambda_{6}^{*T}t^{*}$.
\end{theorem}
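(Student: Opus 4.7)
The plan is to apply the standard KKT theorem for convex programming, using Slater's condition together with Lemma~\ref{Slater 2} to convert the strong-duality statement into a set of primal--dual optimality conditions. The problem \eqref{PnewW1} is convex: the objective $w^T t$ is linear, the constraints $Bx \le b$, $x - t \le 0$, $-x - t \le 0$, $-t \le 0$, and $y - Ax - \gamma = 0$ are affine, and $\|\gamma\|_2 - \epsilon \le 0$ is convex. Hence once strong duality is in force, the KKT conditions are both necessary and sufficient for joint primal--dual optimality.

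First, I would form the Lagrangian
\[
L(x,\gamma,t,\lambda)=w^{T}t-\lambda_{1}(\epsilon-\|\gamma\|_{2})-\lambda_{2}^{T}(b-Bx)-\lambda_{3}^{T}(Ax+\gamma-y)-\lambda_{4}^{T}(t-x)-\lambda_{5}^{T}(x+t)-\lambda_{6}^{T}t,
\]
exactly as in the theorem statement, with sign conventions chosen so that $\lambda_{1},\lambda_{2},\lambda_{4},\lambda_{5},\lambda_{6}\ge 0$ and $\lambda_{3}$ is free (associated with the affine equality). Differentiating $L$ with respect to $x$, $\gamma$, and $t$ yields the three stationarity equations in \eqref{cs1}, namely $B^{T}\lambda_{2}^{*}-A^{T}\lambda_{3}^{*}+\lambda_{4}^{*}-\lambda_{5}^{*}=0$, $\lambda_{1}^{*}\nabla\|\gamma^{*}\|_{2}-\lambda_{3}^{*}=0$, and $w-\lambda_{4}^{*}-\lambda_{5}^{*}-\lambda_{6}^{*}=0$. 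The primal feasibility block and the sign constraints on $\lambda$ come directly from the definitions of \eqref{PnewW1} and \eqref{PdW1}. The remaining equations in \eqref{cs1} are the complementary-slackness relations, one per inequality constraint.

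For the "only if" direction, I invoke Lemma~\ref{Slater 2}: under Slater's condition strong duality holds and a dual optimum is attained, so any primal optimum $(x^{*},\gamma^{*},t^{*})$ together with a dual optimum $\lambda^{*}$ must satisfy the KKT system. For the "if" direction, since \eqref{PnewW1} is convex and its constraint set has nonempty relative interior, any tuple $(x^{*},\gamma^{*},t^{*},\lambda^{*})$ satisfying the KKT system yields a primal feasible point, a dual feasible point, and equal primal and dual objective values (by combining the stationarity and complementarity relations in the Lagrangian), hence certifies optimality in both problems.

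The main technical delicacy will be handling the $\ell_{2}$-norm constraint $\|\gamma\|_{2}\le\epsilon$, because $\|\cdot\|_{2}$ is not differentiable at the origin. I would resolve this by splitting into two cases. If $\gamma^{*}\ne 0$, then $\nabla\|\gamma^{*}\|_{2}=\gamma^{*}/\|\gamma^{*}\|_{2}$ is well-defined and the stationarity equation reads $\lambda_{3}^{*}=\lambda_{1}^{*}\gamma^{*}/\|\gamma^{*}\|_{2}$, which is consistent with the dual feasibility $\|\lambda_{3}^{*}\|_{2}\le\lambda_{1}^{*}$ in \eqref{PdW1}. If $\gamma^{*}=0$, one reads $\nabla\|\gamma^{*}\|_{2}$ as an element of the subdifferential $\partial\|\gamma^{*}\|_{2}=\{u:\|u\|_{2}\le 1\}$, and the stationarity becomes $\lambda_{3}^{*}=\lambda_{1}^{*}u$ for some such $u$, again compatible with $\|\lambda_{3}^{*}\|_{2}\le\lambda_{1}^{*}$. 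With this interpretation everything else is a routine specialization of the KKT theorem for convex programs under Slater's condition, so the only real work is writing down the Lagrangian carefully and computing the partial derivatives and complementarities line by line.
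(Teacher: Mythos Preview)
Your proposal is correct and follows exactly the approach the paper takes: the paper does not give a detailed proof of this theorem at all, but simply states that it follows from the standard KKT theory for convex programs (citing \cite{boyd2004book}) combined with Lemma~\ref{Slater 2}. Your write-up is in fact more careful than the paper's treatment, particularly in handling the subdifferential of $\|\cdot\|_{2}$ at the origin, which the paper glosses over entirely.
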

\noindent From the optimality condition   in \eqref{cs1},  we see that  $t^{*}$ and $\lambda^{*}_{6}$ satisfy the complementary condition.
\begin{corollary}\label{cs theorm}
Let  Slater condition hold for \eqref{PnewW1}.  Then, for  any optimal solution pair  $((x^{*}, t^{*}, \gamma^{*}),\lambda^{*})$, where $(x^{*}, t^{*}, \gamma^{*})$ is optimal to \eqref{PnewW1} and $\lambda^{*}=(\lambda_{1}^{*},..., \lambda_{6}^{*})$ is optimal to \eqref{PdW1},   $t^{*}$ and $\lambda_{6}^{*}$ are complementary in the sense that
$$(t^{*})^{T}\lambda_{6}^{*}=0, ~  t^{*}\geq 0 ~ \mathrm{and} ~ \lambda_{6}^{*}\geq 0.$$
\end{corollary}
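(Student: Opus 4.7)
The plan is to derive Corollary \ref{cs theorm} as an immediate consequence of Theorem \ref{KKT for Pnew}. Under the standing assumption that Slater's condition holds for \eqref{PnewW1}, strong duality (Lemma \ref{Slater 2}) ensures that any optimal primal--dual pair $((x^*,t^*,\gamma^*),\lambda^*)$ must satisfy the full KKT system \eqref{cs1}. Thus the argument reduces to pulling out the three conditions from \eqref{cs1} that involve $t^*$ and $\lambda_6^*$.

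Concretely, I would first invoke Theorem \ref{KKT for Pnew} to assert that every optimal pair $((x^*,t^*,\gamma^*),\lambda^*)$ satisfies all of the relations listed in \eqref{cs1}. From the primal feasibility part of \eqref{cs1} we read off $t^*\geq 0$; from dual feasibility (the constraint $\lambda_i^*\geq 0$ for $i=1,2,4,5,6$) we read off $\lambda_6^*\geq 0$; and from the complementary slackness block we read off the scalar equality $\lambda_6^{*T}t^*=0$. Putting these three together yields precisely the complementarity relation $(t^*)^T\lambda_6^*=0$ together with $t^*\geq 0$ and $\lambda_6^*\geq 0$, as claimed.

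There is essentially no genuine obstacle here: the corollary is a direct specialization of the general KKT complementarity between the primal variable associated with the constraint $t\geq 0$ and its corresponding dual multiplier. The only conceptual point worth emphasizing in the writeup is that Slater's condition is what licenses the appeal to the KKT conditions in the first place, since without strong duality the complementary slackness identities need not hold at an arbitrary optimal pair. Accordingly, the proof can be presented in just two or three lines, citing Theorem \ref{KKT for Pnew} and extracting the relevant components of \eqref{cs1}.
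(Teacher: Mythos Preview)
Your proposal is correct and matches the paper's own treatment: the paper simply states that ``from the optimality condition in \eqref{cs1}, we see that $t^{*}$ and $\lambda^{*}_{6}$ satisfy the complementary condition,'' and then records the corollary without further proof. Your plan of invoking Theorem \ref{KKT for Pnew} and extracting $t^{*}\geq 0$, $\lambda_{6}^{*}\geq 0$, and $\lambda_{6}^{*T}t^{*}=0$ from \eqref{cs1} is exactly this.
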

\noindent Clearly, if  $(x^{*}, t^{*}, \gamma^{*})$ is optimal to \eqref{PnewW1} and $w$ is positive, it must hold $\vert x^{*}\vert=t^{*}$. Hence by Corollary \ref{cs theorm}, for $i=1,...,n$, we have
\begin{equation}\label{x t transfer}
\vert x^{*}_{i}\vert(\lambda_{6}^{*})_{i}=0, ~ (\lambda_{6}^{*})_{i}\geq 0.
\end{equation}
When $w$ is nonnegative, and if $(x^{*}, t^{*}, \gamma^{*})$ is optimal to \eqref{PnewW1}, we have
$$\vert x^{*}_{i}\vert=t_{i}^{*},~ i\in \mathrm{supp}(w);~\vert x^{*}_{i}\vert\leq t_{i}^{*}, ~i\in \overline{\mathrm{supp}(w)}.$$

\noindent For $i\in \mathrm{supp}(w)$, \eqref{x t transfer} is  valid. For $i\in \overline{\mathrm{supp}(w)}$, due to the constraints $w=\lambda_{4}+\lambda_{5}+\lambda_{6}$ and $\lambda_{4}, \lambda_{5}, \lambda_{6} \geq 0$, $w_{i}=0$ implies that $(\lambda_{6}^{*})_{i}=0$. This means \eqref{x t transfer} is also valid for $i\in \overline{\mathrm{supp}(w)}$. Therefore, we have the following result:
\begin{theorem}\label{cs theorem 1}
Let $w$ be a nonnegative given vector, and let  Slater condition hold for \eqref{PnewW1}. Then, for  any optimal solution pair  $((x^{*}, t^{*}, \gamma^{*}),\lambda^{*})$, where $(x^{*}, t^{*}, \gamma^{*})$ is optimal to \eqref{PnewW1} and $\lambda^{*}= (\lambda_{1}^{*},..., \lambda_{6}^{*})$ is optimal to \eqref{PdW1},   $\vert x_{i}^{*}\vert$ and $(\lambda_{6}^{*})_{i}$ are complementary in the sense that
\begin{equation}\label{cs x and lamda}
\vert x_{i}^{*}\vert(\lambda_{6}^{*})_{i}=0 ~ \mathrm{and} ~ (\lambda_{6}^{*})_{i}\geq 0, ~i=1,...,n.
\end{equation}
\end{theorem}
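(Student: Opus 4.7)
The plan is to split the index set $\{1,\ldots,n\}$ according to the support of $w$ and handle the two cases separately, exactly as foreshadowed in the paragraph preceding the statement. Nonnegativity of $(\lambda_6^*)_i$ is free from the dual feasibility constraint $\lambda_6 \geq 0$ in \eqref{PdW1}, so the real work is in the complementarity identity $|x_i^*|(\lambda_6^*)_i = 0$.

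First, I would invoke Corollary \ref{cs theorm}, which (under Slater's condition) gives the aggregate complementarity $(t^*)^T\lambda_6^* = 0$ together with $t^*, \lambda_6^* \geq 0$. Since both vectors are nonnegative, this aggregate identity is equivalent to the componentwise statement $t_i^*(\lambda_6^*)_i = 0$ for every $i$. So the task reduces to replacing $t_i^*$ by $|x_i^*|$ on the support of $w$, and disposing of $(\lambda_6^*)_i$ off the support.

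For $i \in \operatorname{supp}(w)$, I would argue that at any optimum of \eqref{PnewW1} we must have $|x_i^*| = t_i^*$: otherwise, since $w_i > 0$, decreasing $t_i^*$ slightly (while keeping $t_i^* \geq |x_i^*|$) would strictly decrease the objective $w^T t$ without violating any constraint, contradicting optimality. (This is essentially the observation stated in the paragraph just before Theorem \ref{cs theorem 1}; it is also already recorded in the preceding lemma.) Combining $t_i^* = |x_i^*|$ with $t_i^*(\lambda_6^*)_i = 0$ yields $|x_i^*|(\lambda_6^*)_i = 0$.

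For $i \in \overline{\operatorname{supp}(w)}$, $w_i = 0$. The KKT/dual constraint $w = \lambda_4^* + \lambda_5^* + \lambda_6^*$ with $\lambda_4^*, \lambda_5^*, \lambda_6^* \geq 0$ forces $(\lambda_6^*)_i = 0$ (and in fact $(\lambda_4^*)_i = (\lambda_5^*)_i = 0$ as well), so the product $|x_i^*|(\lambda_6^*)_i = 0$ trivially. Together with the two cases, \eqref{cs x and lamda} holds for all $i$. I do not anticipate a real obstacle here; the mildly delicate point is only the justification that $|x_i^*| = t_i^*$ on $\operatorname{supp}(w)$, which is a one-line perturbation argument from the strictly increasing dependence of the objective on $t_i$ when $w_i > 0$.
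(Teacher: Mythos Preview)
Your proposal is correct and follows essentially the same approach as the paper: split on $\mathrm{supp}(w)$, use Corollary~\ref{cs theorm} together with $|x_i^*|=t_i^*$ for $i\in\mathrm{supp}(w)$, and use the dual constraint $w=\lambda_4^*+\lambda_5^*+\lambda_6^*$ with nonnegativity to force $(\lambda_6^*)_i=0$ for $i\notin\mathrm{supp}(w)$. The paper's argument is precisely the paragraph preceding the theorem that you already identified.
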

The relation \eqref{cs x and lamda} implies that
$$ \left\| x^{*} \right\|_{0}+\left\| \lambda_{6}^{*} \right\|_{0}\leq n, $$
where $n$ is   the dimension of $x^{*}$ or $\lambda_{6}^{*}$.
Suppose $\vert x^{*}\vert $ and $\lambda^{*}_{6}$ are strictly complementary, i.e.,
$$\vert x^{*}\vert^{T}\lambda_{6}^{*}=0,  ~\lambda_{6}^{*}\geq 0 ~ \mathrm{and}~\vert x^{*} \vert+\lambda_{6}^{*}> 0.$$ Then
$$\left\| x^{*} \right\|_{0}+\left\| \lambda_{6}^{*} \right\|_{0}= n.$$

\subsection{Strict complementarity}
 For  nonlinear optimization models, the strictly complementary   property might not hold. However,  it might be possible to develop a condition such that the strict complementarity  holds for the model \eqref{PnewW} or \eqref{PnewW1}.  We now develop such a condition for the problems \eqref{PnewW1} and \eqref{PdW1}  under  the following assumption.
\begin{assumption}\label{assumption 1}
Let $W=\mathrm{diag}(w)$ satisfy the following properties:
\begin{itemize}[label=$\bullet$]
\item{$\langle G1\rangle$}\quad The problem \eqref{PnewW} with $w$ has an optimal solution which is a relative interior point in the feasible set $T$, denoted by $x^{*}\in ri(T)$, such that $$\left\| y-Ax^{*}\right\|_{2}<\epsilon, ~Bx^{*}\leq b,$$
\item{$\langle G2\rangle$}\quad the optimal value $Z^{*}$ of  \eqref{PnewW} is finite and positive, i.e., $Z^{*}\in (0, \infty)$,\\
\item{$\langle G3\rangle$}\quad $w_{j}\in (0, \infty]$ for all $1\leq j \leq n$.
\end{itemize}
\end{assumption}
\begin{example}\label{example 1}
Consider the  system $\left\| y-Ax\right\|_{2}\leq \epsilon, Bx\leq b$ with $\epsilon=10^{-1}$, where
{\small $$ A=\left[ \begin{array}{cccc}
1 & 0 & -2 &5 \\
 0&  1&  4& -9\\
 1&  0&  -2&5
\end{array} \right], B=\left[ \begin{array}{cccc}
-0.5 & 0 & 1 &-2.5 \\
 0.5&  -0.5&  -1& 2\\
 -3&  -3&  -2&3
\end{array} \right], y=\left[ \begin{array}{c}
1\\
-1\\
1
\end{array} \right],b=\left[ \begin{array}{c}
-0.5\\
1\\
-1
\end{array} \right].$$}
\end{example}
\noindent We can see that the problem \eqref{PnewW} with $w=(1,100,1,100)^{T}$ has an optimal solution $(1/2,0,-1/4,0)^{T}$ which satisfies Assumption \ref{assumption 1}.

  Next  we prove the following theorem concerning  the strict complementarity for \eqref{PnewW1} and \eqref{PdW1} under Assumption \ref{assumption 1}.

\begin{theorem}\label{scc}
 Let  $y$ and $b $ be two given vectors, $A\in R^{m\times n}$ and $B\in R^{l\times n}$ be two given matrices, and $w$ be a given weight which satisfies  Assumption \ref{assumption 1}.
Then there exists a pair $((x^{*}, t^{*}, \gamma^{*}), \lambda^{*})$, where $(x^{*}, t^{*}, \gamma^{*})$ is an optimal solution to \eqref{PnewW1} and $\lambda^{*}=(\lambda_{1}^{*},...,\lambda_{6}^{*})$ is an optimal solution to \eqref{PdW1}, such that $t^{*}$ and $\lambda_{6}^{*}$ are strictly complementary, i.e.,
$$(t^{*})^{T}\lambda_{6}^{*}=0, ~t^{*}+\lambda_{6}^{*}>0, ~(t^{*}, \lambda_{6}^{*})\geq 0.$$
\end{theorem}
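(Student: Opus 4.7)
My plan is to exploit Assumption~\ref{assumption 1} to reduce the primal--dual pair \eqref{PnewW1}--\eqref{PdW1} to a linear-programming pair, and then invoke a Goldman--Tucker type strict complementarity argument for linear programs. The conditions $\langle G1\rangle$--$\langle G3\rangle$ are tailor-made to make this reduction work: $\langle G1\rangle$ localizes inside the strict interior of the Euclidean ball, $\langle G3\rangle$ makes all weights positive so that $|x^*|=t^*$ and the complementarity part of \eqref{cs1} really does collapse onto the pair $(t,\lambda_6)$, and $\langle G2\rangle$ ensures that the resulting LP has a finite positive optimum so that the construction is non-degenerate.

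First I would use $\langle G1\rangle$ to pick an optimal primal $\bar x\in \mathrm{ri}(T)$ with $\|y-A\bar x\|_2<\epsilon$. By Lemma~\ref{Slater 2} strong duality holds and a dual optimum exists, so by Theorem~\ref{KKT for Pnew} every dual optimum $\lambda^*$ paired with $(\bar x,|\bar x|,\,y-A\bar x)$ satisfies $\lambda_1^*(\epsilon-\|y-A\bar x\|_2)=0$. Since $\|y-A\bar x\|_2<\epsilon$, this forces $\lambda_1^*=0$, and the cone bound $\|\lambda_3^*\|_2\le\lambda_1^*$ then forces $\lambda_3^*=0$. Consequently every optimal dual lies in the polyhedron
\begin{equation*}
D=\{(\lambda_2,\lambda_4,\lambda_5,\lambda_6)\ge 0:\; B^T\lambda_2+\lambda_4-\lambda_5=0,\; \lambda_4+\lambda_5+\lambda_6=w\},
\end{equation*}
and the dual reduces to the linear program $\max\{-\lambda_2^Tb:(\lambda_2,\lambda_4,\lambda_5,\lambda_6)\in D\}$.

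On the primal side, because the ball constraint is strictly slack at $\bar x$, the face of primal optima meeting a neighborhood of $\bar x$ is cut out by linear (in)equalities $Bx\le b$, $|x|\le t$, $t\ge 0$, $w^Tt=Z^*$, intersected with the open Euclidean ball $\{x:\|y-Ax\|_2<\epsilon\}$. Restricting the primal to this affine slice turns it into an LP whose dual is precisely the reduced LP on $D$. I would then apply the Goldman--Tucker strict complementarity theorem to this LP pair to obtain optimal $(x^*,t^*)$ and $\lambda^*\in D$ that are strictly complementary on every complementary pair of variables; in particular on the pair $(t,\lambda_6)$ one gets $(t^*)^T\lambda_6^*=0$, $t^*+\lambda_6^*>0$, $(t^*,\lambda_6^*)\ge 0$. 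Setting $\gamma^*=y-Ax^*$ produces the desired optimum of \eqref{PnewW1}.

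The main obstacle is that the nonlinear ball constraint would normally destroy the LP structure demanded by Goldman--Tucker, and one must check that the strictly complementary LP solution remains feasible and optimal for the original conic problem \eqref{PnewW1}. I would handle this by a convex combination of the LP solution with $\bar x$: since $\bar x$ lies in the open Euclidean ball $\{x:\|y-Ax\|_2<\epsilon\}$ and the LP solution can be taken arbitrarily close to $\bar x$ along the optimal face (the face has constant objective value $Z^*$ by $\langle G2\rangle$ and $\langle G3\rangle$), a small convex combination stays strictly inside the ball while preserving all linear equalities and the strict complementarity pattern, hence remains an optimal solution of \eqref{PnewW1}.
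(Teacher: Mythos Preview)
Your approach is correct and takes a genuinely different route from the paper's. The paper does not reduce to a linear program; instead, for each coordinate $j$ it sets up an auxiliary conic problem \eqref{proof eq1} (minimize $-t_j$ over the optimal face of \eqref{PnewW1}) together with its dual \eqref{proof eq2}, uses $\langle G1\rangle$ to verify Slater's condition for each, and argues by cases: if the auxiliary optimum is negative there is a primal optimum with $t_j>0$, and if it is zero the auxiliary dual (with $\tau>0$, forced by $\langle G3\rangle$) furnishes a dual optimum of \eqref{PdW1} with $(\lambda_6)_j>0$. Averaging the $n$ resulting primal--dual pairs yields the strictly complementary pair. In effect the paper re-derives the Goldman--Tucker construction inside the conic setting rather than reducing out of it.

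Your route is shorter and more conceptual: you use $\langle G1\rangle$ once to force $\lambda_1^*=\lambda_3^*=0$ at every dual optimum, which collapses both problems to a bona fide LP pair, and then import Goldman--Tucker as a black box. The trade-off is reliance on an external theorem versus the paper's self-contained argument. One point in your write-up should be tightened: rather than ``restricting the primal to an affine slice inside the open ball'' (which is not literally an LP), it is cleaner to \emph{drop} the ball constraint entirely, note that the resulting LP still has optimal value $Z^*$ (its LP dual is precisely your reduced problem on $D$, which you have already shown has value $Z^*$), apply Goldman--Tucker to that LP pair, and only then convex-combine with $(\bar x,|\bar x|)$. Your convex-combination step is sound: both $(\bar x,|\bar x|)$ and the Goldman--Tucker point $(x^*,t^*)$ lie on the LP optimal face, so any convex combination does as well; complementary slackness between $|\bar x|$ and $\lambda_6^*$ then ensures the strict complementarity pattern survives, and taking the combination close enough to $\bar x$ restores the ball constraint.
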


\begin{proof}
Note that $(G1)$ in Assumption \ref{assumption 1} implies that  Slater condition holds for \eqref{PnewW1}. This, combined with $(G2)$, indicates  from Lemma \ref{Slater 2} that the duality gap is $0$, and  the optimal value $Z^{*}$ for \eqref{PdW1} can be attained.   For any given index $j: 1\leq j \leq n$, we consider a series of  minimization problems:
\begin{equation}\label{proof eq1}
\begin{array}{cl}
\min\limits_{(x, t, \gamma)}& -t_{j}\\
  \mathrm{s.t.}&\left \|\gamma \right \|_{2}\leq \epsilon,~Bx\leq b,~ \gamma=y-Ax,\\
 &\left | x \right | \leq t, ~-w^{T}t\geq -Z^{*}, ~ t\geq 0.
\end{array}
\end{equation}
 The dual problem of \eqref{proof eq1} can be obtained by using the same method for developing the dual problem of \eqref{PnewW1}, which is stated as follows:
\begin{equation}\label{proof eq2}
\begin{array}{lcl}
&\max\limits_{(\mu, \tau)}&-\mu_{1}\epsilon-\mu_{2}^{T}b+\mu_{3}^{T}y-\tau Z^{*}\\
 &\mathrm{s.t.}& B^{T}\mu_{2}-A^{T}\mu_{3}+\mu_{4}-\mu_{5}=0,~\left \|\mu_{3}\right\|_{2}\leq \mu_{1},\\
 & &\tau w=\mu_{4}+\mu_{5}+\mu_{6}+p,~ \mu_{i}\geq 0,~ i=1,2,4,5,6, ~\tau\geq 0,
\end{array}
\end{equation}
where $p$ is a vector whose $j$th component is $1$ and the remains  are $0$, i.e., $$p_{j}=1;~p_{i}=0,~i\neq j.$$ Next we  show that  \eqref{proof eq1} and \eqref{proof eq2} satisfy the strong duality property under  Assumption \ref{assumption 1}.  It can be seen that $(x,t,\gamma)$ is a feasible solution to \eqref{proof eq1} if and only if  $(x,t,\gamma)$ is an optimal solution of \eqref{PnewW1}, or if  $x$ is optimal to \eqref{PnewW}. If $w$ satisfies the conditions in Assumption \ref{assumption 1}, then there exists an optimal solution $\bar{x}$ of \eqref{PnewW} such that
$\left\|y-A\bar{x}\right\|_{2}<\epsilon,~B\bar{x}\leq b $ and $w^{T}\vert \bar{x}\vert=Z^{*},$  which means  there is a relative interior point $(\bar{x}, \bar{t}, \bar{\gamma})$ of the feasible set of \eqref{proof eq1}   satisfying
$$\left\| \bar{\gamma} \right\|_{2}<\epsilon,~ B\bar{x}\leq b,~ \bar{\gamma}=y-A\bar{x}, ~\vert \bar{x} \vert \leq \bar{t}, ~w^{T}\bar{t}\leq Z^{*}, ~\bar{t}\geq 0.$$
 As a result, the strong duality  holds for \eqref{proof eq1} and \eqref{proof eq2} for all $j$. Moreover, due to $(G2)$ and $(G3)$,  $w$ is  positive and $Z^{*}$ is finite,  so $t_{j}$ cannot be $\infty$.  Thus the optimal value of all $j$th minimization problems \eqref{proof eq1} is finite. It follows from Lemma  \ref{Slater 2} that for each $j$th optimization  \eqref{proof eq1} and \eqref{proof eq2}, the duality gap is $0$, and each $j$th dual problem \eqref{proof eq2} can achieve their optimal value.

We use $\xi^{*}_{j}$ to denote the optimal value of the $j$th  problem in \eqref{proof eq1}.  Clearly, $\xi^{*}_{j}$ is nonpositive, i.e.,  $$\quad  \xi^{*}_{j}<0 ~~ \mathrm{or} ~~ \xi^{*}_{j}=0.$$

Case 1: \quad  $\xi^{*}_{j}<0$. Then \eqref{PnewW1} has  an optimal solution $(x', t', \gamma')$ where the $j$th component in $t'$  is positive since $t_{j}'=-\xi^{*}_{j}$ and admits the largest value amongst all the optimal solutions of \eqref{PnewW1}. By Theorem \ref{cs theorm}, the complementary condition implies that \eqref{PdW1}  has an optimal solution $\lambda'=(\lambda'_{1},...,\lambda'_{6})$ where $j$th component in $\lambda_{6}'$  is 0.  Then we have an optimal solution pair $((x',t',\gamma'),\lambda')$ for  \eqref{PnewW1} and  \eqref{PdW1} such that $t_{j}'>0$ and $(\lambda_{6}')_{j}=0$. It means that  $$t_{j}'=-\xi^{*}_{j}>0  \quad \mathrm{implies} \quad ( \lambda_{6}')_{j}=0.$$

Case 2: \quad   $\xi^{*}_{j}=0$. Following from  the strong duality between \eqref{proof eq1} and \eqref{proof eq2}, we have an optimal solution $(\mu, \tau )$ of the $j$th optimization problem \eqref{proof eq2} such that
$$-\mu_{1}\epsilon-\mu_{2}^{T}b+\mu_{3}^{T}y=\tau Z^{*}.$$  First, we consider $\tau\neq 0$. The above equality can be reduced to
$$-\frac{\mu_{1}\epsilon}{\tau}-\frac{\mu_{2}^{T}}{\tau}b+\frac{\mu_{3}^{T}}{\tau}y=Z^{*},$$
and we also have
$$B^{T}\frac{\mu_{2}}{\tau}-A^{T}\frac{\mu_{3}}{\tau}+\frac{\mu_{4}}{\tau}-\frac{\mu_{5}}{\tau}=0,~\left\| \frac{\mu_{3}}{\tau}\right\|_{2}\leq \frac{\mu_{1}}{\tau}, ~
 w=\frac{\mu_{4}}{\tau}+\frac{\mu_{5}}{\tau}+\frac{\mu_{6}}{\tau}+\frac{p}{\tau}.$$
We set $$\lambda_{1}^{'}=\frac{\mu_{1}}{\tau}, ~\lambda_{2}^{'}=\frac{\mu_{2}}{\tau}, ~\lambda_{3}^{'}=\frac{\mu_{3}}{\tau}, ~\lambda_{4}^{'}=\frac{\mu_{4}}{\tau}, ~\lambda_{5}^{'}=\frac{\mu_{5}}{\tau}, ~\lambda_{6}^{'}=\frac{\mu_{6}}{\tau}+\frac{p}{\tau}.$$  Due to  strong duality of \eqref{PnewW1} and \eqref{PdW1} again,   $\lambda^{'}=( \lambda_{1}^{'},..., \lambda_{6}^{'})$ is optimal to \eqref{PdW1}. Note that
$$(\lambda_{6})_{j}^{'}=\frac{(\mu_{6})_{j}+1}{\tau}.$$
Thus  $(\lambda_{6})_{j}^{'}>0$, which follows from $\mu_{6}\geq 0$ and $\tau>0$. Thus $$t'_{j}=-\xi_{j}^{*}=0~ ~\mathrm{implies} ~~
(\lambda_{6})_{j}^{'}>0.$$

Note that the third constraint in $j$th optimization of \eqref{proof eq2} requires $\tau\neq 0$ since $w$, $\mu_{4}$, $\mu_{5}$, $\mu_{6}$ are all non-negative and  $p_{j}=1$ so that the $j$th component in $\tau w$ must be greater or equal than 1. Therefore, all  $j$th optimization problems in \eqref{proof eq2} are infeasible if $\tau=0$. As a result, the optimal solution $(\mu, \tau)$ of \eqref{proof eq2} with $\tau=0$ is impossible to occur. Combining the cases 1 and 2 implies that  for each $1\leq j \leq n$, we have an optimal solution pair $((x^{(j)}, t^{(j)}, \gamma^{(j)}), \lambda^{(j)})$ such that  $t^{(j)}_{j}>0$ or $(\lambda_{6}^{(j)})_{j}>0$. For all $j$th solution pairs,  they all satisfy the following properties:
\begin{itemize}[label=$\bullet$]
\item[(i)]    $(x^{(j)}, t^{(j)}, \gamma^{(j)})$ is optimal to \eqref{PnewW1}, and  $(\lambda_{1}^{(j)}, \lambda_{2}^{(j)}, \lambda_{3}^{(j)}, \lambda_{4}^{(j)}, \lambda_{5}^{(j)}, \lambda_{6}^{(j)})$ is optimal to \eqref{PdW1};
\item[(ii)]     the $j$th component of $t^{(j)}$ and the $j$th component of $\lambda_{6}^{(j)}$ are strictly complementary, such that $t^{(j)}_{j}(\lambda_{6}^{(j)})_{j}=0,~ t^{(j)}_{j}+(\lambda_{6}^{(j)})_{j} >0 $.
\end{itemize}
Denote $(x^{*}, t^{*}, \gamma^{*},\lambda^{*})$ by
$$x^{*}=\frac{1}{n}\sum_{j=1}^{n} x^{(j)}, ~  t^{*}=\frac{1}{n}\sum_{j=1}^{n}  t^{(j)}, ~ \gamma^{*}=\frac{1}{n}\sum_{j=1}^{n}\gamma^{(j)}, ~ \lambda_{i}^{*}=\frac{1}{n}\sum_{j=1}^{n}\lambda^{(j)}_{i}, ~ i=1,2,\cdots,6.$$
Since $(x^{(j)}, t^{(j)}, \gamma^{(j)}),~ j=1,2,...,n$ are all  optimal solutions of \eqref{PnewW1}, then for any $ j$, we have
\begin{equation}\label{proof eq3}
\left\{\begin{array}{ll}
w^{T}t^{(j)}=Z^{*},~\left\| \gamma^{(j)} \right\|_{2}\leq \epsilon,~ Bx^{(j)}\leq b, \\
\gamma^{(j)}=y-Ax^{(j)}, ~ \vert x^{(j)}\vert\leq t^{(j)}, ~t^{(j)}\geq 0.
\end{array} \right.
\end{equation}
It is easy to see that $$w^{T}t^{*}=Z^{*},~ Bx^{*}\leq b, ~\gamma^{*}=y-Ax^{*}, ~ t^{*}\geq 0.$$
Moreover,
$$\left\|\gamma^{*}\right\|_{2}=  \left\| \frac{1}{n}\sum_{j=1}^{n}\gamma^{(j)}\right\|_{2} \leq \sum_{j=1}^{n}\left\| \frac{1}{n}\gamma^{(j)}\right\|_{2}\leq \epsilon,$$
$$\vert x^{*}\vert=\left| \frac{1}{n}\sum_{j=1}^{n}x^{(j)}\right| \leq \frac{1}{n}\sum_{j=1}^{n}\vert x^{(j)}\vert\leq \frac{1}{n}\sum_{j=1}^{n}t^{(j)}=t^{*},$$
where the first inequality of each  equation above follows from the triangle inequality. Then the vector $(x^{*}, t^{*}, \gamma^{*})$ satisfies
 \begin{equation}\label{proof eq4}
\left\{\begin{array}{ll}
w^{T}t^{*}=Z^{*},~\left\| \gamma^{*} \right\|_{2}\leq \epsilon, ~ Bx^{*}\leq b,\\
 \gamma^{*}=y-Ax^{*}, ~\vert x^{*}\vert\leq t^{*}, ~t^{*}\geq 0.
\end{array} \right.
\end{equation}
Thus $(x^{*}, t^{*}, \gamma^{*})$ is optimal to \eqref{PnewW1}, and similarly it can be proven that  $\lambda^{*}=(\lambda^{*}_{1},...,\lambda^{*}_{6})$ is an optimal solution to \eqref{PdW1}. By strong duality,  $t^{*}$ and $\lambda_{6}^{*}$ are complementary. Due to the above-mentioned property (2),  it is impossible to find a pair $(t^{*}, \lambda_{6}^{*})$ such that their $j$th components  are both 0. Thus,  $(t^{*}, \lambda^{*}_{6})$ is the strictly complementary solution pair for \eqref{PnewW1} and \eqref{PdW1}.
\end{proof}

\begin{remark}
It can be seen that the following two sets
$$P^{*}=\lbrace i: t_{i}^{*}>0\rbrace ~ \mathrm{and}~ Q^{*}=\lbrace i: (\lambda_{6}^{*})_{i}>0 \rbrace$$
are invariant for all pairs of strictly complementary solutions.   Suppose there are two  distinct optimal pairs of the solutions of \eqref{PnewW1} and \eqref{PdW1}, denoted by $(x_{(k)}$, $t_{(k)}$, $\gamma_{(k)}$, $\lambda_{(k)})$,  $k=1,2$, such that  $( t_{(k)},  \lambda_{6(k)}),  k=1,2$ are strictly complementary pairs, where $(x_{(k)}, t_{(k)}, \gamma_{(k)})$ are optimal to \eqref{PnewW1} and $(\lambda_{(k)})$ are optimal to   \eqref{PdW1}.
Due to  Theorem \ref{cs theorm}, we know that
$$(\lambda_{6(1)})^{T}t_{(2)}=0~\mathrm{and}~(\lambda_{6(2)})^{T}t_{(1)}=0.$$
It means that the supports of all  strictly complementary pairs of  \eqref{PnewW1} and  \eqref{PdW1} are invariant. Otherwise, there exists an index $j$ such that $(t_{(1)})_{j}>0$ and $(\lambda_{6(2)})_{j}>0$, leading to a contradiction.
\end{remark}

 Since  the optimal solution $(x^{*}, t^{*}, \gamma^{*})$ to \eqref{PnewW1} must have $t^{*}=\vert x^{*}\vert$ if $w>0$,  the main results of  Theorem \ref{scc} also imply that  $\vert x^{*}\vert$ and $ \lambda_{6}^{*}$ are strictly complementary under  Assumption \ref{assumption 1}.

\section{Bilevel model for optimal weights}\label{chapter weight basis}

For weighted $\ell_{1}$-minimization,  how to determine a weight to guarantee   the exact  recovery,   sign recovery or  support recovery of  sparse signals is an important issue in  CS theory.
Based on the complementary condition and strict complementarity discussed above, we may develop a bilevel optimization model for such a weight, which is called the optimal weight in  \cite{zhaom2015}, \cite{zhao2016} and \cite{zhaobook2018}.

\begin{definition}[Optimal Weight]\label{definition optimal weight}
A weight is called an optimal weight if the solution of the weighted $\ell_{1}$-problem with this weight is one of the optimal solution of the $\ell_{0}$-minimization problem.
\end{definition}

Let  $Z^{*}$ be the optimal value of \eqref{PnewW}. Notice that the optimal solution of \eqref{PnewW} remains the same when $w$ is replaced by $\alpha w$ for any positive $\alpha$. When $Z^{*}\neq 0$, by replacing $W$ by $W/Z^{*}$, we can obtain
$$1=\min_{x}\lbrace \left\| (W/Z^{*}) x\right\|_{1}:x\in T\rbrace,$$
where  $W=\mathrm{diag}(w)$.
We use $\zeta$ to denote the set of such weights, i.e.,
\begin{equation}\label{zeta}
\zeta=\lbrace w\in R^{n}_{+}: ~1=\min_{x}\lbrace \left\| Wx\right\|_{1}, x\in T\rbrace \rbrace.
\end{equation}
Clearly, $\bigcup\limits_{\alpha> 0}\alpha\zeta$ is the set of weights such that \eqref{PnewW} has a finite and positive  optimal value, and $\zeta$ is not necessarily bounded.  Under  Slater condition,  Theorem \ref{cs theorem 1} implies that given any $w\in \zeta$, any optimal solutions of \eqref{PnewW1} and \eqref{PdW1}, denoted by  $(x^{*}(w),t^{*}(w),\gamma^{*}(w))$ and $\lambda^{*}(w)=(\lambda^{*}_{1}(w),..., \lambda^{*}_{6}(w))$, satisfy that   $\vert x^{*}(w)\vert$ and $\lambda_{6}^{*}(w)$ are complementary, i.e.,
 \begin{equation}\label{cc eq1}
  \left\|  x^{*}(w)\right\|_{0}+\left\|  \lambda_{6}^{*}(w)\right\|_{0}\leq n.
 \end{equation}
  If $w^{*}$ satisfies Assumption \ref{assumption 1}, then Slater condition is automatically satisfied for \eqref{PnewW1} with $w^{*}$ and \eqref{cc eq1}  is also valid. Moreover, by Theorem \ref{scc}, there exists a strictly complementary pair $(\vert x^{*}(w^{*})\vert, \lambda_{6}^{*}(w^{*}))$ such that
 $$\left\|  x^{*}(w^{*})\right\|_{0}+\left\|  \lambda_{6}^{*}(w^{*})\right\|_{0}=n.$$
 If $w^{*}$ is an optimal weight (see Definition \ref{definition optimal weight}), then $\lambda^{*}_{6}(w^{*})$ must be  the densest slack variable among all $w\in \zeta$, and locating a sparse vector can be converted to  $$\lambda^{*}_{6}(w^{*})=\argmax\lbrace \left\| \lambda^{*}_{6}(w)\right\|_{0}:w\in \zeta\rbrace.$$
Inspired by the above fact, we  develop a theorem under  Assumption \ref{assumption 2}  which claims that finding a sparsest point in $T$ is equivalent to seeking the proper weight $w$ such that  the dual problem  \eqref{PdW1} has the densest optimal variable $\lambda_{6}$. Such  weights are optimal weights and can be determined by certain bilevel optimization.  This idea was first introduced by Zhao and Ko{\v{c}}vara \cite{zhaom2015} (and also by Zhao and Luo \cite{zhao2016}) to solve  the standard $\ell_{0}$-minimization (C1). In this paper, we generalize their idea to solve the model \eqref{Pnew} by developing new convex relaxation technique for the underlying bilevel optimization problem. Before that we make the following assumption:
\begin{assumption}\label{assumption 2}
Let $\nu$ be an arbitrary sparsest point in $T$ given in \eqref{feasible set T}. There exists a weight $\bar{w}\geq 0$ such that
\begin{itemize}[label=$\bullet$]
\item{$\langle H1\rangle$} the problem \eqref{PnewW} with $\bar{w}$ has an optimal solution $\bar{x}$ such that $\left\| \bar{x}\right\|_{0}=\left\| \nu\right\|_{0}$,
 \item{$\langle H2\rangle$} there exists  an  optimal variable in \eqref{PdW1} with $\bar{w}$, denoted as $\bar{\lambda}$, such that $\bar{\lambda_{6}}$ and $\bar{x}$ are strictly complementary,
 \item{$\langle H3\rangle$} the optimal value of \eqref{PnewW} with $\bar{w}$ is finite and positive.
\end{itemize}
\end{assumption}

 An example for the existence of a weight satisfying Assumption \ref{assumption 2} is given in the remark  following the next theorem.
\begin{theorem}\label{th idea}
Let Slater condition and  Assumption \ref{assumption 2} hold. Consider the bilevel optimization

\begin{equation}\label{belevel1}
\begin{array}{lcl}
& \max\limits_{(w, \lambda)} & \left\| \lambda_{6}\right\|_{0}\\
& \mathrm{s.t.} & B^{T}\lambda_{2}-A^{T}\lambda_{3}+\lambda_{4}-\lambda_{5}=0,~\left \|\lambda_{3}\right\|_{2}\leq \lambda_{1}, \\
& & -\lambda_{1}\epsilon-\lambda_{2}^{T}b+\lambda_{3}^{T}y=\min\limits_{x}\lbrace \left\| Wx\right\|_{1}: x\in T\rbrace,\\
& &w= \lambda_{4}+\lambda_{5}+\lambda_{6}\geq 0,~\lambda_{i}\geq 0, ~ i=1,2,4,5,6,
\end{array}
\end{equation}
where $W=\mathrm{diag}(w)$, and $T$ is given as \eqref{feasible set T}. If $(w^{*}, \lambda^{*})$ is an optimal solution to the above optimization problem \eqref{belevel1}, then  any optimal solution $x^{*}$ to
\begin{equation}\label{original}
\min_{x}\lbrace \left\| W^{*}x\right\|_{1}: x\in T\rbrace,
\end{equation}
 is a sparsest point in $T$, where $W^{*}=\mathrm{diag}(w^{*})$.  
\end{theorem}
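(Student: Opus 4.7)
The plan is to exploit the (weak) complementarity relation of Theorem \ref{cs theorem 1} together with Assumption \ref{assumption 2} to sandwich $\|x^*\|_0$ between $\|\nu\|_0$ and itself, where $\nu$ denotes an arbitrary sparsest point in $T$ (so that $\|\nu\|_0 = \min_{x \in T}\|x\|_0$). The driving intuition is: maximizing $\|\lambda_6\|_0$ over dual-feasible pairs in \eqref{belevel1} forces, via complementary slackness, the sparsity of the corresponding primal optimum to be minimized.

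First, I would invoke Assumption \ref{assumption 2} to produce a weight $\bar w \geq 0$ together with a primal optimum $\bar x$ of \eqref{PnewW} (with weight $\bar w$) and a dual optimum $\bar\lambda$ of \eqref{PdW1} such that $\|\bar x\|_0 = \|\nu\|_0$, $(\bar x, \bar\lambda_6)$ is strictly complementary, and the optimal value is finite and positive. Strict complementarity yields
\[
\|\bar x\|_0 + \|\bar\lambda_6\|_0 = n, \qquad \text{hence} \qquad \|\bar\lambda_6\|_0 = n - \|\nu\|_0.
\]
I would then verify that $(\bar w,\bar\lambda)$ is feasible for the bilevel problem \eqref{belevel1}: the dual feasibility constraints of \eqref{PdW1} are inherited directly, the identity $\bar w = \bar\lambda_4 + \bar\lambda_5 + \bar\lambda_6 \geq 0$ is part of dual feasibility, and the equality constraint (which asserts zero duality gap between the dual objective and $\min_x\{\|\bar Wx\|_1 : x\in T\}$) follows from Lemma \ref{Slater 2} once Slater's condition and finiteness of the optimal value (from H3) are in force. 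Optimality of $(w^*,\lambda^*)$ then gives
\[
\|\lambda_6^*\|_0 \;\geq\; \|\bar\lambda_6\|_0 \;=\; n - \|\nu\|_0.
\]

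Next, given any optimal $x^*$ of \eqref{original}, I would apply Theorem \ref{cs theorem 1} to the pair $(x^*,\lambda^*)$: since $(w^*,\lambda^*)$ is feasible for \eqref{belevel1}, the vector $\lambda^*$ is dual-optimal for \eqref{PdW1} at weight $w^* \geq 0$, and Slater's condition still holds, so $|x_i^*|(\lambda_6^*)_i = 0$ for every $i$. This complementarity implies $\|x^*\|_0 + \|\lambda_6^*\|_0 \leq n$, which combined with the previous step gives
\[
\|x^*\|_0 \;\leq\; n - \|\lambda_6^*\|_0 \;\leq\; n - (n - \|\nu\|_0) \;=\; \|\nu\|_0.
\]
Since $x^*\in T$ and $\nu$ is sparsest in $T$, the reverse inequality $\|x^*\|_0 \geq \|\nu\|_0$ is automatic, and therefore $\|x^*\|_0 = \|\nu\|_0$, completing the proof.

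The main obstacle I anticipate is the verification that $(\bar w,\bar\lambda)$ satisfies the equality constraint in \eqref{belevel1}; i.e., that Assumption \ref{assumption 2} truly places $(\bar w,\bar\lambda)$ in the feasible set of the bilevel problem. This amounts to checking that Slater's condition transfers from \eqref{PnewW1} (at a convenient relative interior point) to the lifted problem with weight $\bar w$, so that Lemma \ref{Slater 2} applies and the duality gap vanishes at $(\bar x, \bar\lambda)$. Once this bookkeeping is in place, the rest is a clean two-line inequality chain driven by weak and strict complementarity.
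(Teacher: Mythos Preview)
Your proposal is correct and follows essentially the same argument as the paper: produce via Assumption~\ref{assumption 2} a feasible pair $(\bar w,\bar\lambda)$ for \eqref{belevel1} with $\|\bar\lambda_6\|_0 = n-\|\nu\|_0$, use optimality of $(w^*,\lambda^*)$ to get $\|\lambda_6^*\|_0\ge n-\|\nu\|_0$, then apply Theorem~\ref{cs theorem 1} to conclude $\|x^*\|_0\le n-\|\lambda_6^*\|_0\le\|\nu\|_0$. The paper handles the ``obstacle'' you flag exactly as you anticipate, via Lemma~\ref{Slater 2} and the finiteness in $(H3)$, and also makes explicit (as you do implicitly) that the equality constraint in \eqref{belevel1} forces $\lambda^*$ to be dual-optimal at $w^*$, which is what licenses the use of Theorem~\ref{cs theorem 1}.
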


\begin{proof}
Let $\nu$ be a sparsest point in $T$.  Suppose that   $(w^{*}, \lambda^{*})$  is an optimal solution of  \eqref{belevel1}. We now prove that any optimal solution to \eqref{original} is a sparsest point in $T$ under Assumption \ref{assumption 2}. Let $w'$ be a weight satisfying  Assumption \ref{assumption 2}. This means that  \eqref{PnewW} with $W=\mathrm{diag}(w')$ has an optimal solution $x'$ such that $\left\| x'\right\|_{0}=\left\| \nu \right\|_{0}$. Moreover,  there exists a strictly complementary pair  ($x', \lambda_{6}'$) satisfying
\begin{equation}\label{idea eq1}
\left\| x'\right\|_{0}+\left\| \lambda_{6}'\right\|_{0}=n= \left\| \lambda_{6}'\right\|_{0}+\left\| \nu\right\|_{0},
\end{equation}
where  $\lambda'=(\lambda_{1}',...,\lambda_{6}')$ is the dual optimal solution of \eqref{PdW1} with $w=w'$, i.e.,
\begin{equation}\label{idea eq5}
\begin{array}{lcl}
& \max\limits_{\lambda}&-\lambda_{1}\epsilon-\lambda_{2}^{T}b+\lambda_{3}^{T}y\\
 &\mathrm{s.t.}& B^{T}\lambda_{2}-A^{T}\lambda_{3}+\lambda_{4}-\lambda_{5}=0,~\left \|\lambda_{3}\right\|_{2}\leq \lambda_{1},\\
 & &w'=\lambda_{4}+\lambda_{5}+\lambda_{6},~ \lambda_{i}\geq 0,~ i=1,2,4,5,6.
\end{array}
\end{equation}
By Lemma \ref{Slater 2},   Slater condition implies that strong duality holds for the problems \eqref{idea eq5} and  \eqref{PnewW1}  with $w'$. Note that  the optimal values of \eqref{PnewW1} and \eqref{PnewW} with $w'$ are equal and finite so that $(w', \lambda')$ is feasible to \eqref{belevel1}. Let $x^{*}$ be an arbitrary solution to \eqref{original}.
Note that   \eqref{PnewW1} with $w^{*}$ is equivalent to  \eqref{original},
to which the dual problem is
\begin{equation}\label{idea eq4}
\begin{array}{lcl}
& \max\limits_{\lambda}&-\lambda_{1}\epsilon-\lambda_{2}^{T}b+\lambda_{3}^{T}y\\
 &\mathrm{s.t.}& B^{T}\lambda_{2}-A^{T}\lambda_{3}+\lambda_{4}-\lambda_{5}=0,~\left \|\lambda_{3}\right\|_{2}\leq \lambda_{1},\\
 & &w^{*}=\lambda_{4}+\lambda_{5}+\lambda_{6},~ \lambda_{i}\geq 0,~ i=1,2,4,5,6.
\end{array}
\end{equation}
 Moreover,   $\lambda^{*}=(\lambda_{1}^{*},...,\lambda_{6}^{*})$ is feasible to  \eqref{idea eq4} and the third constraint of \eqref{belevel1} implies that there is no duality gap between \eqref{PnewW1} with $w^{*}$ and \eqref{idea eq4}. Thus, by strong duality,    $\lambda^{*}=(\lambda_{1}^{*},...,\lambda_{6}^{*})$ is an optimal solution to \eqref{idea eq4}. Therefore, by Theorem \ref{cs theorem 1}, $\vert x^{*}\vert$ and $\lambda_{6}^{*}$ are complementary. Hence, we have
\begin{equation}\label{idea eq2}
\left\| x^{*}\right\|_{0}\leq n- \left\| \lambda_{6}^{*}\right\|_{0}.
\end{equation}
 Since $(w^{*}, \lambda^{*})$ is optimal to  \eqref{belevel1},  we have
\begin{equation}\label{idea eq3}
\left\| \lambda_{6}'\right\|_{0}\leq \left\| \lambda_{6}^{*}\right\|_{0}.
\end{equation}
Plugging \eqref{idea eq1}  and \eqref{idea eq3} into \eqref{idea eq2}  yields
$$\left\| x^{*}\right\|_{0}\leq n- \left\| \lambda_{6}^{*}\right\|_{0} \leq n-\left\| \lambda_{6}'\right\|_{0}=\left\| x'\right\|_{0}=\left\| \nu\right\|_{0},$$
which implies $\left\| x^{*}\right\|_{0}=\left\| \nu\right\|_{0},$ due to the assumption that $\nu$ is the sparsest point in $T$. Then any optimal solution to \eqref{idea eq1} is a sparsest point in $T$.

\end{proof}

Given Assumption \ref{assumption 2} and Slater condition,  finding a sparsest point in $T$ is tantamountly equal to look for the densest dual solution via the bilevel model \eqref{belevel1}.

By the definition of optimal weights, Theorem \ref{th idea}  implies that $w^{*}$ is an optimal weight  by which a sparsest point can be obtained via \eqref{PnewW}.  If there is no weight satisfying the properties in Assumption \ref{assumption 2},  a heuristic method for finding a sparse point in $T$ can be also developed from \eqref{cc eq1}  since  the increase in  $\Vert \lambda_{6}(w)\Vert_{0}$ leads to the decrease of $\left\| x(w) \right\|_{0}$ to a certain level. Before we close this section, we make  some remarks for Assumption \ref{assumption 2}.

\begin{remark}
Consider Example \ref{example 1}.
It can be seen that $(0,0,2,1)^{T}$
 is a sparsest point in the feasible set $T$ of this example. If we choose weight $w=(100, 100, 1, 1)^{T}$, then we can see that $(0, 0, 2, 1)^{T}$ is the unique optimal solution of \eqref{PnewW} which satisfies $\langle H1\rangle$ and $\langle H3\rangle$ in Assumption \ref{assumption 2}.   In addition, $(0, 0, 2, 1)^{T}$ is a relative  interior point in the feasible set $T$. This, combined with the fact that weights are positive, implies that Assumption \ref{assumption 1}  is satisfied, and hence the strict complementarity  is satisfied which means that $\langle H2\rangle$ in Assumption \ref{assumption 2} is satisfied. Specifically, we can find an optimal dual solution $\bar{\lambda}=(\bar{\lambda}_{1},...,\bar{\lambda}_{6})$ with  $\bar{\lambda}_{6}=(32.27, 31.71, 0, 0)^{T}$.  Therefore, in this example, the weight $w=(100, 100, 1, 1)^{T}$ satisfies  Assumption \ref{assumption 2}.\\
\end{remark}

 \section{Dual-density-based algorithms}\label{Dual algorithm}
 Note that it is  difficult to solve  a bilevel optimization. We now  develop three types of relaxation models for solving the bilevel optimization \eqref{belevel1}.
\subsection{Relaxation models}\label{relaxation section1}
Zhao and Luo \cite{zhao2016} presented a method to relax a bilevel problem similar to  \eqref{belevel1}. Motivated by their idea,  we now relax our bilevel model. We focus on  relaxing the difficult constraint $-\lambda_{1}\epsilon-\lambda_{2}^{T}b+\lambda_{3}^{T}y=\min_{x}\lbrace \left\| Wx\right\|_{1}: x\in T\rbrace$  in \eqref{belevel1}.
 By replacing the objective function $\Vert \lambda_{6}\Vert_{0}$ in \eqref{belevel1} by $\mathrm{\Psi}_{\varepsilon}(\lambda_{6})\in \textbf{F}, $ where $ \lambda_{6}\geq 0$, we obtain an approximation problem of \eqref{belevel1}, i.e.,
 \begin{equation}\label{belevel3}
\begin{array}{lcl}
&\max\limits_{(w, \lambda)}& \mathrm{\Psi}_{\varepsilon}(\lambda_{6})\\
 &\mathrm{s.t.}& B^{T}\lambda_{2}-A^{T}\lambda_{3}+\lambda_{4}-\lambda_{5}=0,~\left \|\lambda_{3}\right\|_{2}\leq \lambda_{1}\\
 & & -\lambda_{1}\epsilon-\lambda_{2}^{T}b+\lambda_{3}^{T}y=\min_{x}\lbrace \left\| Wx\right\|_{1}: x\in T\rbrace,\\
 & & w= \lambda_{4}+\lambda_{5}+\lambda_{6} \geq 0,~ \lambda_{i}\geq 0,~ i=1,2,4,5,6.
\end{array}
\end{equation}
 We recall the set of the weights $\zeta$ given in \eqref{zeta}.  It can be seen that $w$ being feasible to \eqref{belevel3} implies that \eqref{PnewW1} and  \eqref{PdW1}  satisfy the strong duality and have the same finite optimal value,  which is equivalent to the fact that $w\in \zeta$ when Slater condition holds for \eqref{PnewW1}.   Moreover, note that the constraints of \eqref{belevel3}  indicate that for any given $w\in \zeta$,  $\lambda$ satisfying the  constraints of \eqref{belevel3} is  optimal  to \eqref{PdW1}. Therefore the purpose of \eqref{belevel3} is to find the densest   dual optimal variable $\lambda_{6}$ for all $w\in \zeta$. Thus \eqref{belevel3} can be rewritten as
\begin{equation}\label{belevel4}
\begin{array}{lcl}
&\max\limits_{(w, \lambda)}& \mathrm{\Psi}_{\varepsilon}(\lambda_{6})\\
 &\mathrm{s.t.}& w\in \zeta, ~B^{T}\lambda_{2}-A^{T}\lambda_{3}+\lambda_{4}-\lambda_{5}=0, ~\left \|\lambda_{3}\right\|_{2}\leq \lambda_{1},\\
 & & w= \lambda_{4}+\lambda_{5}+\lambda_{6}\geq 0, ~\lambda_{i}\geq 0, ~i=1,2,4,5,6,\\
 & & \mathrm{where} ~ \lambda=(\lambda_{1},...,\lambda_{6}) ~ \mathrm{is} ~ \mathrm{optimal} ~ \mathrm{to}  \\
 & &\max_{\lambda}\lbrace-\lambda_{1}\epsilon-\lambda_{2}^{T}b+\lambda_{3}^{T}y:\left \|\lambda_{3}\right\|_{2}\leq \lambda_{1}, ~w= \lambda_{4}+\lambda_{5}+\lambda_{6},\\
 & & B^{T}\lambda_{2}-A^{T}\lambda_{3}+\lambda_{4}-\lambda_{5}=0, ~ \lambda_{i}\geq 0, ~i=1,2,4,5,6\rbrace.
\end{array}
\end{equation}
Denote the feasible set of \eqref{PdW1} by
\begin{equation}\label{dual feasible set}
\begin{split}
D(w):=&\lbrace \lambda: B^{T}\lambda_{2}-A^{T}\lambda_{3}+\lambda_{4}-\lambda_{5}=0, ~\left \|\lambda_{3}\right\|_{2}\leq \lambda_{1}, ~w= \lambda_{4}+\lambda_{5}+\lambda_{6}\geq 0,~\\
&\lambda_{i}\geq 0, ~i=1,2,4,5,6\rbrace.
\end{split}
\end{equation}
Clearly, the problem \eqref{belevel4} can be presented as
\begin{equation}\label{belevel4'}
\begin{array}{lcl}
&\max\limits_{(w, \lambda)}& \mathrm{\Psi}_{\varepsilon}(\lambda_{6})\\
 &\mathrm{s.t.}& w\in \zeta, ~\lambda\in D(w),~ \mathrm{where} ~ \lambda ~ \mathrm{is} ~ \mathrm{optimal} ~ \mathrm{to}  \\
 & &\max_{\lambda}\lbrace-\lambda_{1}\epsilon-\lambda_{2}^{T}b+\lambda_{3}^{T}y:\lambda\in D(w)\rbrace.
\end{array}
\end{equation}
   An optimal solution of \eqref{belevel4'} can be obtained by  maximizing  $\mathrm{\Psi}_{\varepsilon}(\lambda_{6})$ which is based on maximizing  $-\lambda_{1}\epsilon-\lambda_{2}^{T}b+\lambda_{3}^{T}y$ over the feasible set of \eqref{belevel4'}.  Therefore, $\mathrm{\Psi}_{\varepsilon}(\lambda_{6})$ and $-\lambda_{1}\epsilon-\lambda_{2}^{T}b+\lambda_{3}^{T}y$ are required to be maximized over the dual constraints $\lambda\in D(w)$ for all  $w\in \zeta$. To maximize both the objective functions,  we consider  the following model as the first relaxation of \eqref{belevel1}:
 \begin{equation}\label{belevel5}
\begin{array}{lcl}
&\max\limits_{(w, \lambda)}& -\lambda_{1}\epsilon-\lambda_{2}^{T}b+\lambda_{3}^{T}y+\alpha\mathrm{\Psi}_{\varepsilon}(\lambda_{6})\\
 &\mathrm{s.t.}& w\in \zeta, ~\lambda\in D(w).\\
\end{array}
\end{equation}
where $\alpha>0$ is a given small parameter.

Now we  develop the second type of relaxation of the bilevel optimization \eqref{belevel1}.
 Note that under Slater condition,  for all $w\in \zeta$, the dual objective $-\lambda_{1}\epsilon-\lambda_{2}^{T}b+\lambda_{3}^{T}y$ must be nonnegative and is homogeneous in $\lambda=(\lambda_{1},...,\lambda_{6})$. Moreover, if $w\in \zeta$, then  $-\lambda_{1}\epsilon-\lambda_{2}^{T}b+\lambda_{3}^{T}y$  has a nonnegative upper bound due to the weak duality.  Inspired by this  observation, in order to maximize both $\mathrm{\Psi}_{\varepsilon}(\lambda_{6})$ and $-\lambda_{1}\epsilon-\lambda_{2}^{T}b+\lambda_{3}^{T}y$, we may introduce  a small positive $\alpha$ and consider the following approximation:
 \begin{equation}\label{belevel7}
\begin{array}{lcl}
&\max\limits_{(w, \lambda)}& -\lambda_{1}\epsilon-\lambda_{2}^{T}b+\lambda_{3}^{T}y  \\
 &\mathrm{s.t.}& w\in \zeta, ~\lambda\in D(w),~ -\lambda_{1}\epsilon-\lambda_{2}^{T}b+\lambda_{3}^{T}y\leq \alpha \mathrm{\Psi}_{\varepsilon}(\lambda_{6}).\\
\end{array}
\end{equation}
The constraint
\begin{equation}\label{relax idea 2}
-\lambda_{1}\epsilon-\lambda_{2}^{T}b+\lambda_{3}^{T}y\leq \alpha \mathrm{\Psi}_{\varepsilon}(\lambda_{6})
\end{equation}
  implies that $\mathrm{\Psi}_{\varepsilon}(\lambda_{6})$ might be maximized when $-\lambda_{1}\epsilon-\lambda_{2}^{T}b+\lambda_{3}^{T}y$ is maximized if $\alpha$ is small and suitably chosen.

Finally, we
consider the following inequality in order to develop third type of convex relaxation.
\begin{equation}\label{relax idea 3}
-\lambda_{1}\epsilon-\lambda_{2}^{T}b+\lambda_{3}^{T}y+f(\lambda_{6})\leq \gamma,
\end{equation}
where $\gamma$ is a given positive number,  $f(\lambda_{6})$ is a certain function depending on $\varphi_{\varepsilon}((\lambda_{6})_{i})$, which satisfies the following properties:
\begin{itemize}
\setlength{\itemindent}{1em}
\item[$(I1)$] $f(\lambda_{6})$ is convex and continuous  with respect to $\lambda_{6}\in R_{+}^{n}$;
\item[$(I2)$]     maximizing  $\mathrm{\Psi}_{\varepsilon}(\lambda_{6})$ over the feasible set can be equivalently or approximately achieved by  minimizing $f(\lambda_{6})$.
\end{itemize}
There are many functions satisfying the properties $(I1)$ and $(I2)$. For instance, we may consider  the following  functions:
\begin{itemize}
\setlength{\itemindent}{1em}
\item[$(J1)$] $e^{-\mathrm{\Psi}_{\varepsilon}(\lambda_{6})}$; $(J2)$ $-\log(\mathrm{\Psi}_{\varepsilon}(\lambda_{6})+\sigma_{1})$; $(J3)$  $\frac{1}{\mathrm{\Psi}_{\varepsilon}(\lambda_{6})+\sigma_{1}}$; $(J4)$ $\frac{1}{n}\sum_{i=1}^{n} \frac{1}{\varphi_{\varepsilon}((\lambda_{6})_{i})+\sigma_{1}}$,
\end{itemize}
where $\sigma_{1}$ is a small positive number. Now we claim that the functions $(J1)$-$(J4)$ satisfy $(I1)$ and $(I2)$. Clearly, the  functions $(J1),(J2)$ and $(J3)$ satisfy  $(I2)$. Note that $$\frac{1}{\mathrm{\Psi}_{\varepsilon}(\lambda_{6})+\sigma_{1}} \leq \frac{1}{n}\sum_{i=1}^{n} \frac{1}{\varphi_{\varepsilon}((\lambda_{6})_{i})+\sigma_{1}}.$$
Thus the minimization of   $\frac{1}{n}\sum_{i=1}^{n} \frac{1}{\varphi_{\varepsilon}((\lambda_{6})_{i})+\sigma_{1}}$ is likely to imply the minimization of  $\frac{1}{\mathrm{\Psi}_{\varepsilon}(\lambda_{6})+\sigma_{1}}$, which means  the maximization of $\mathrm{\Psi}_{\varepsilon}(\lambda_{6})$. It is easy to check that the functions $(J1)$-$(J4)$ are continuous in $\lambda_{6}\geq 0$. It is also easy to check  that $(J1)$-$(J3)$ are convex for $\lambda_{6}\geq 0$. Note that for any $\varphi_{\varepsilon}((\lambda_{6})_{i})> -\sigma_{1},~i=1,...,n$, all functions $\frac{1}{\varphi_{\varepsilon}((\lambda_{6})_{i})+\sigma_{1}}$ are convex.  Therefore their sum  is convex for $\lambda_{6}\geq 0$ as well. Thus all  functions $(J1)$-$(J4)$ satisfy the two properties $(I1)$ and $(I2)$.  Moreover, the functions $(J1), (J3)$ and $(J4)$ have  finite values even  when $(\lambda_{6})_{i}\rightarrow \infty$.

 Replacing $-\lambda_{1}\epsilon-\lambda_{2}^{T}b+\lambda_{3}^{T}y\leq \alpha \mathrm{\Psi}_{\varepsilon}(\lambda_{6})$ in \eqref{belevel7} by \eqref{relax idea 3} leads to the  model
\begin{equation}\label{belevel9}
\begin{array}{lcl}
&\max\limits_{(w, \lambda)}& -\lambda_{1}\epsilon-\lambda_{2}^{T}b+\lambda_{3}^{T}y  \\
 &\mathrm{s.t.}& w\in \zeta, ~\lambda\in D(w),~-\lambda_{1}\epsilon-\lambda_{2}^{T}b+\lambda_{3}^{T}y+f(\lambda_{6})\leq \gamma.\\
\end{array}
\end{equation}
Clearly, the convexity of $f(\lambda_{6})$ guarantees that \eqref{belevel9}
is  a convex optimization. Moreover, \eqref{relax idea 3} and the  property $(I2)$ of  $f(\lambda_{6})$ imply that maximizing $-\lambda_{1}\epsilon-\lambda_{2}^{T}b+\lambda_{3}^{T}y$   is roughly equivalent to minimizing  $f(\lambda_{6})$  over the feasible set,  and thus    maximizing   $\mathrm{\Psi}_{\varepsilon}(\lambda_{6})$. The properties $(I1)$ and $(I2)$ ensure that the problem \eqref{belevel9} is computationally tractable  and is a certain relaxation of \eqref{belevel4'} and \eqref{belevel1}.

\subsection{One-step dual-density-based algorithm}
Note that the set $\zeta$ has no explicit form, and we need to deal with the set $\zeta$  to solve  three relaxation problems \eqref{belevel5}, \eqref{belevel7} and \eqref{belevel9}. First we  relax $w\in \zeta$ to $w\in R^{n}_{+}$ and obtain three convex minimization models. In this case, the difficulty for solving the problems \eqref{belevel5} and \eqref{belevel7}   is that $\mathrm{\Psi}_{\varepsilon}(\lambda_{6})$ might attain an infinite value when $w_{i}\rightarrow \infty$.  We may introduce   a  bounded merit function $\mathrm{\Psi}_{\varepsilon}\in \text{F}$  into \eqref{belevel5} and \eqref{belevel7} so that the  value of $\mathrm{\Psi}_{\varepsilon}(\lambda_{6})$ is finite.  Moreover, to avoid the infinite optimal value in the model \eqref{belevel5}, $w\in\zeta$  can be  relaxed to $-\lambda_{1}\epsilon-\lambda_{2}^{T}b+\lambda_{3}^{T}y\leq 1$ due to the weak duality. Based on the above observation, we obtain a solvable relaxation for \eqref{belevel5} and \eqref{belevel7} respectively as follows:
 \begin{equation}\label{belevell}
\begin{array}{cl}
\max\limits_{(w, \lambda)}& -\lambda_{1}\epsilon-\lambda_{2}^{T}b+\lambda_{3}^{T}y+\alpha\mathrm{\Psi}_{\varepsilon}(\lambda_{6})\\
 \mathrm{s.t.}& w\in R^{n}_{+}, ~\lambda\in D(w),~-\lambda_{1}\epsilon-\lambda_{2}^{T}b+\lambda_{3}^{T}y\leq 1.\\
\end{array}
\end{equation}
and
 \begin{equation}\label{belevel8}
\begin{array}{cl}
\max\limits_{(w, \lambda)}& -\lambda_{1}\epsilon-\lambda_{2}^{T}b+\lambda_{3}^{T}y  \\
 \mathrm{s.t.}& w\in R^{n}_{+}, ~\lambda\in D(w),~ -\lambda_{1}\epsilon-\lambda_{2}^{T}b+\lambda_{3}^{T}y\leq \alpha \mathrm{\Psi}_{\varepsilon}(\lambda_{6}).\\
\end{array}
\end{equation}
Due to the constraints \eqref{relax idea 3}, the  optimal value of the problem \eqref{belevel9} is finite if it is feasible.  By replacing $\zeta$ by $R^{n}_{+}$ in \eqref{belevel9} , we also obtain  a new  relaxation of  \eqref{belevel1}:
\begin{equation}\label{belevel10}
\begin{array}{lcl}
&\max\limits_{(w, \lambda)}& -\lambda_{1}\epsilon-\lambda_{2}^{T}b+\lambda_{3}^{T}y  \\
 &\mathrm{s.t.}& w\in R^{n}_{+},~\lambda\in D(w),~-\lambda_{1}\epsilon-\lambda_{2}^{T}b+\lambda_{3}^{T}y+f(\lambda_{6})\leq \gamma.\\
\end{array}
\end{equation}
 Thus, a new weighted $\ell_{1}$-algorithm for the model \eqref{Pnew}  is developed:

\begin{algorithm}[H]
\label{free weight dual algorithm}
\caption{One-step dual-density-based algorithm [\textbf{DDA} for short]}
 \begin{algorithmic}[4]
 \REQUIRE~~\\
 merit function $\mathrm{\Psi}_{\varepsilon}\in \textbf{F}$, matrices $A\in R^{m\times n}$ and $B\in R^{l\times n}$;\\   vectors $y\in R^{m}$ and $b\in R^{l}$,  small positive parameters $(\varepsilon,\epsilon)\in R_{++}^{2}$;

\STEP ~~\\ 1. Solve the dual-density-based problem to obtain  the vector $(w^{0},\lambda_{6}^{0})$, \\
2. Let $x^{0}\in \argmin\lbrace (w^{0})^{T}\vert x\vert: x\in T\rbrace.$
\end{algorithmic}
\end{algorithm}
\noindent In this paper, we consider the forms DDA(\MyRoman{1})-DDA(\MyRoman{3}). The corresponding  constants, the dual-density-based problems for these algorithms are listed in the following table.

\begin{table}[H]
\caption{DDA(\MyRoman{1})-DDA(\MyRoman{3})}
\label{dual table}
 \begin{center}
 \begin{tabular}{lll}
\hline\noalign{\smallskip}
Name & Constants & Dual-density-based  problem  \\
\noalign{\smallskip}\hline\noalign{\smallskip}
DDA(\MyRoman{1})             & $\alpha$          &     \eqref{belevell}     \\
DDA(\MyRoman{2})                  & $\alpha$         &      \eqref{belevel8}     \\
DDA(\MyRoman{3})                 & $\gamma$         &    \eqref{belevel10} \\
\noalign{\smallskip}\hline
\end{tabular}
\end{center} 
\end{table}

\subsection{Dual-density-based reweighted $\ell_{1}$-algorithm}
Now we develop reweighted $\ell_{1}$-algorithms for \eqref{Pnew} based on  \eqref{belevel4'}. To this need, we introduce a  bounded convex set $\mathcal{W}$ for $w$ to approximate the set $\zeta$.
By replacing $\zeta$ with $\mathcal{W}$ in the models \eqref{belevel5}, \eqref{belevel7} and \eqref{belevel9}, we  obtain the following three types of convex relaxation models of \eqref{belevel1}:
 \begin{equation}\label{belevel6}
\begin{array}{lcl}
 &\max\limits_{(w, \lambda)}& -\lambda_{1}\epsilon-\lambda_{2}^{T}b+\lambda_{3}^{T}y+\alpha\mathrm{\Psi}_{\varepsilon}(\lambda_{6})\\
 &\mathrm{s.t.}& w\in \mathcal{W}, ~\lambda\in D(w), ~-\lambda_{1}\epsilon-\lambda_{2}^{T}b+\lambda_{3}^{T}y\leq 1,
\end{array}
\end{equation}
\begin{equation}\label{belevelnew1}
\begin{array}{lcl}
 &\max\limits_{(w, \lambda)}& -\lambda_{1}\epsilon-\lambda_{2}^{T}b+\lambda_{3}^{T}y  \\
 &\mathrm{s.t.}& w\in \mathcal{W}, ~\lambda\in D(w),~ -\lambda_{1}\epsilon-\lambda_{2}^{T}b+\lambda_{3}^{T}y\leq \alpha \mathrm{\Psi}_{\varepsilon}(\lambda_{6}),
\end{array}
\end{equation}
 \begin{equation}\label{belevel11}
\begin{array}{lcl}
 &\max\limits_{(w, \lambda)}& -\lambda_{1}\epsilon-\lambda_{2}^{T}b+\lambda_{3}^{T}y  \\
 &\mathrm{s.t.}& w\in \mathcal{W}, \lambda\in D(w),~-\lambda_{1}\epsilon-\lambda_{2}^{T}b+\lambda_{3}^{T}y+f(\lambda_{6})\leq \gamma.\\
\end{array}
\end{equation}
  Inspired by \cite{zhaom2015} and \cite{zhao2016}, we can choose the following bounded convex set:
\begin{equation}\label{polytope 1}
\mathcal{W}= \biggr\lbrace w\in R_{+}^{n}: (x^{0})^{T}w\leq M, 0\leq w\leq M^{*}e\biggr\rbrace,
\end{equation}
where $x^{0}$ is the initial point,  which can be the  solution of the $\ell_{1}$-minimization \eqref{Pnew1}, and $M$, $M^{*}$ are two given  numbers such that $1\leq M\leq M^{*}$.  We also consider the set
\begin{equation}\label{polytope 2}
\mathcal{W}=\biggr\lbrace w\in R_{+}^{n}:  w_{i}\leq \frac{M}{\vert x^{0}_{i}\vert+\sigma_{2}}\biggr\rbrace,
\end{equation}
where both $M$  and $\sigma_{2}$ are two given positive numbers.  $(x^{0})^{T}w\leq M$ in \eqref{polytope 1}  and $w_{i}\leq \frac{M}{\vert x^{0}_{i}\vert+\sigma_{2}}$ in \eqref{polytope 2} are  motivated by the idea of existing reweighted algorithm  in \cite{CWB2008,zhaom2015,zhao2016}.  The  set  $\mathcal{W}$ can   be seen as not only a relaxation of $\zeta$, but also being used to ensure the boundedness of $\mathrm{\Psi}_{\varepsilon}(\lambda_{6})$.
Based on \eqref{polytope 1} and \eqref{polytope 2}, we  update $\mathcal{W}$   in the algorithms either as:
\begin{equation}\label{polytope 1l}
\mathcal{W}^{k}= \biggr\lbrace w\in R_{+}^{n}: (x^{k-1})^{T}w\leq M,~ 0\leq w\leq M^{*}e\biggr\rbrace,
\end{equation}
or
\begin{equation}\label{polytope 2l}
\mathcal{W}^{k}=\biggr\lbrace w\in R_{+}^{n}:  w_{i}\leq \frac{M}{\vert x^{k-1}_{i}\vert+\sigma_{2}}\biggr\rbrace.
\end{equation}
This yields the following  algorithm (DRA for short).

\begin{algorithm}[H]
\caption{ Dual-density-based reweighted  $\ell_{1}$-algorithm [\textbf{DRA}]}
 \begin{algorithmic}[4]
 \REQUIRE~~\\
merit function $\mathrm{\Psi}_{\varepsilon}\in \textbf{F}$,    matrices  $A\in R^{m\times n}$ and $B\in R^{l\times n}$;\\
  vectors $y\in R^{m}$ and $b\in R^{l}$,  small  positive parameters $(\varepsilon,\epsilon)\in R^{2}_{++}$;\\
      the iteration index $k$, the largest number of iteration $k_{\max}$;
 \INITIAL ~~  \\
  1. Solve the dual-density-based problem  to get  $w^{0}$;\\
  2. Solve the weighted $\ell_{1}$-minimization $ \min\lbrace (w^{0})^{T}\vert x\vert: x\in T\rbrace$ to get   $x^{0}$ and $\mathcal{W}^{1}$.\\

\ENSURE ~~\\

At the current iterate $x^{k-1}$, \\
1. Solve the dual-density-based weighted problem with $\mathcal{W}^{k}$ to obtain $(w^{k},\lambda_{6}^{k})$,\\

2. Solve the $\ell_{1}$-minimization $\min\lbrace (w^{k})^{T}\vert x\vert: x\in T\rbrace$ to get $x^{k}$;\\

3. Update $\mathcal{W}^{k+1}$ and repeat the above iteration until $k=k_{\max}$ (or certain other stopping criterion is met).
\end{algorithmic}
\end{algorithm}

The initial step of DRA is to solve DDA and to get the initial weight $w^{0}$ and the set $\mathcal{W}^{1}$. Different choice of the dual-density-based problems, dual-density-based weighted problem  and  the set $\mathcal{W}$ yields different forms of DRA. In this paper, we consider the following forms of   DRA(\MyRoman{1})-DRA(\MyRoman{6}). The corresponding constants,  $\mathcal{W}$, DDA and the dual-density-based weighted problems for these algorithms are listed in the following table.
\begin{table}[H]
\caption{DRA(\MyRoman{1})-DRA(\MyRoman{6})}
\label{dual table 1}
\begin{center}
\begin{tabular}{lllcc}
\hline\noalign{\smallskip}
Name & Constants & DDA & $\mathcal{W}$ & Dual-density-based weighted problem  \\
\noalign{\smallskip}\hline\noalign{\smallskip}
DRA(\MyRoman{1})             & $\alpha,M, M^{*}$          & DDA(\MyRoman{1})                & \eqref{polytope 1l} & \eqref{belevel6}           \\
DRA(\MyRoman{2})                  & $\alpha,\sigma_{2},M$         &    DDA(\MyRoman{1})                            & \eqref{polytope 2l} & \eqref{belevel6}         \\
DRA(\MyRoman{3})                 & $\alpha,M, M^{*}$         & DDA(\MyRoman{2})                              & \eqref{polytope 1l} & \eqref{belevelnew1}        \\
DRA(\MyRoman{4})                & $\alpha,\sigma_{2},M$         & DDA(\MyRoman{2})                              & \eqref{polytope 2l} & \eqref{belevelnew1}    \\
DRA(\MyRoman{5})                & $\gamma, M, M^{*}$         & DDA(\MyRoman{3})                              & \eqref{polytope 1l} & \eqref{belevel11}    \\
DRA(\MyRoman{6})                & $\gamma,\sigma_{2},M$         & DDA(\MyRoman{3})                              & \eqref{polytope 2l} & \eqref{belevel11} \\
\noalign{\smallskip}\hline
\end{tabular}
\end{center}
\end{table}

\noindent Notice  that $w$ is restricted in the bounded set $\mathcal{W}$ so that the optimal value of \eqref{belevel6} cannot be infinite. Therefore,  we can use the bounded or unbounded  merit functions in $\mathrm{\Psi}_{\varepsilon}\in \textbf{F}$, for example, \eqref{merit function eq4}, \eqref{merit function eq1}, \eqref{merit function eq2} and \eqref{merit function eq3}. In addition, $M$ can not be too small. 	If $M$ is a sufficiently small positive number, there might be  a gap between the maximum of  $-\lambda_{1}\epsilon-\lambda_{2}^{T}b+\lambda_{3}^{T}y$ and the maximum of $\mathrm{\Psi}_{\varepsilon}(\lambda_{6})$ over the feasible set.

  The existing reweighted $\ell_{1}$-algorithm, RA, always needs an initial iterate, which is often  obtained by solving a simple  $\ell_{1}$-minimization. Unlike these existing methods, DRA(\MyRoman{1})-DRA(\MyRoman{6}) can create an initial iterate by themselves. 

\section{Numerical experiments}\label{section numerical}
 In  this section,  by choosing proper parameters and merit functions, the performance of the dual-density-based reweighted $\ell_{1}$-algorithms DRA(\MyRoman{1})-DRA(\MyRoman{6}) will be demonstrated. We use the random examples of convex sets $T$ in our experiments. We first set the noise level $\epsilon$ and the parameter $\varepsilon$ of merit functions. The sparse vector $x^{*}$ and the entries of $A$ and $B$ (if $B$ is not deterministic)  are generated from  Gaussian  random variables with zero mean and unit variance. For  each generated  $(x^{*}, A, B)$, we set $y$ and $b$ as follows:
\begin{equation}\label{xABexample}
y=Ax^{*}+\frac{c_{1}\epsilon}{\Vert c\Vert_{2}}c, ~Bx^{*}+d=b,
\end{equation}
 where   $d\in R^{l}_{+}$ is generated  as  absolute  Gaussian  random variables with zero  mean and  unit variance, and $c_{1}\in R$ and $c\in R^{m}$ are generated as  Gaussian random variables with zero mean and  unit variance. Then the convex set $T$ is generated, and all examples of $T$ are generated  this way.    We  use
\begin{equation}\label{criterion1'}
\left\|x'-x^{*}\right\| / \Vert x^{*}\Vert\leq 10^{-5}
\end{equation}
 as our default stopping criterion where $x'$ is the solution found by the  algorithm, and  one success is counted as long as  \eqref{criterion1'} is satisfied. In our experiments, we make 200 random examples for each sparsity level. All  the algorithms are  implemented  in Matlab 2018a, and all the convex problems  are solved by CVX   (Grant and Boyd \cite{cvx}).

To  demonstrate the performance of the dual-density-based reweighted $\ell_{1}$-\-algo\-rithms listed in  Table \ref{dual table 1}, we mainly consider the two cases in our  experiments
\begin{itemize}
\setlength{\itemindent}{1em}
\item[(N1)] $A\in R^{50\times 200}$, $B=0$ and $b=0$;
\item[(N2)]    $A\in R^{50\times 200}$, $B\in R^{50\times 200}$.
\end{itemize}
For all cases,  we implement the algorithms DRA(\MyRoman{1})-DRA(\MyRoman{6}), and compare their performance in finding the sparse vectors in $T$ with  $\ell_{1}$-minimization and the algorithm  RA with different merit functions.

\subsection{Merit functions and parameters}\label{section default parameters}
The default parameters and merit functions in DRA(\MyRoman{1}) and DRA(\MyRoman{2}) are set as  that of the  algorithms in \cite{zhao2016}. We set \eqref{merit function eq1} as the default merit function for   DRA(\MyRoman{3}) and DRA(\MyRoman{4}), and set $(J3)$ with
\begin{equation}\label{ND2fchoice}
f(\lambda_{6})=\frac{1}{\mathrm{\Psi}_{\varepsilon}(\lambda_{6})+\sigma_{1}}, ~ \mathrm{\Psi}_{\varepsilon}(\lambda_{6})=\sum_{i=1}^{n}\frac{(\lambda_{6})_{i}}{(\lambda_{6})_{i}+\varepsilon}, ~\lambda_{6}\in R_{+}^{n}
\end{equation}
 as the default  function   for  DRA(\MyRoman{5}) and DRA(\MyRoman{6}). We choose  the noise level $\epsilon=10^{-4}$ for both cases.
The default parameters for each dual-density-based reweighted $\ell_{1}$-algorithm are summarized in the following table:
\begin{table}[H]
\caption{Default parameters in algorithms}
\label{parameter table}
\begin{center}
\begin{tabular}{clclllll}
\hline\noalign{\smallskip}
Algorithm/Parameter & $\alpha$ & $\gamma$ & $M$ & $M^{*}$  & $\sigma_{1}$ & $\sigma_{2}$ & $\varepsilon$  \\
\noalign{\smallskip}\hline\noalign{\smallskip}
  DRA(\MyRoman{1}) & $10^{-8}$ & & $10^{2}$ & $10^{3}$ & & & $10^{-15}$      \\
 DRA(\MyRoman{2}) & $10^{-8}$ & & $10^{2}$&  & &$10^{-1}$  & $10^{-15}$     \\
  DRA(\MyRoman{3}) & $10^{-5}$ & & $10$& $10$ & &&  $10^{-15}$     \\
   DRA(\MyRoman{4}) & $10^{-5}$ & & $10$& &&$10^{-1}$  &  $10^{-15}$    \\
    DRA(\MyRoman{5}) &  & $1$& $10$ & $10$&   $10^{-1}$&& $10^{-15}$    \\
     DRA(\MyRoman{6}) &  & $1$& $10$ &  & $10^{-1}$&  $10^{-1}$& $10^{-15}$ \\
\noalign{\smallskip}\hline
\end{tabular}
\end{center}
\end{table}
\noindent The algorithms in the following table will be compared with  DRA(\MyRoman{1})-DRA(\MyRoman{6}).
 
\begin{table}[H]
\caption{Algorithms to be compared}
\label{reweighted table}
\begin{center}
\begin{tabular}{ccc}
\hline\noalign{\smallskip}
Name & Merit Function & (Reweighted) Methods  \\
\noalign{\smallskip}\hline\noalign{\smallskip}
 $\ell_{1}$                 & $\left\|x\right\|_{1} $        & $\ell_{1}$-minimization       \\
CWB              &     $\sum_{i=1}^{n} \log(\vert x_{i}\vert+\varepsilon)$      &         RA \\
ARCTAN                 & \eqref{merit function eq3}         &        RA \\
\noalign{\smallskip}\hline
\end{tabular}
\end{center}
\end{table}

Cand\`{e}s,  Wakin and Boyd  in \cite{CWB2008} developed a reweighted algorithm which is referred to as CWB in this section. From the perspective of the reweighted $\ell_{1}$-algorithm (RA) in \cite{zhaoli2012}, CWB is a special case of  RA using the merit function $\sum_{i=1}^{n}$ $\log(\vert x_{i}\vert+\varepsilon)$. The ARCTAN is also a special case of RA using the function   \eqref{merit function eq3} as the merit function for sparsity.  CWB, ARCTAN and $\ell_{1}$-minimization \eqref{Pnew1} will be compared with DRA(\MyRoman{1})-DRA(\MyRoman{6}) in  sparse vector recovery in this section. The parameter $\varepsilon$ in  RA  is set to $10^{-1}$ or $10^{-5}$, and the remaining parameters  are the same as DRA.

\subsection{Case $\mathrm{(N1)}$: }\label{case1}
\begin{figure}[H]
\centering
\begin{minipage}[t]{0.4\textwidth}
\centering
\includegraphics[width=4.5cm]{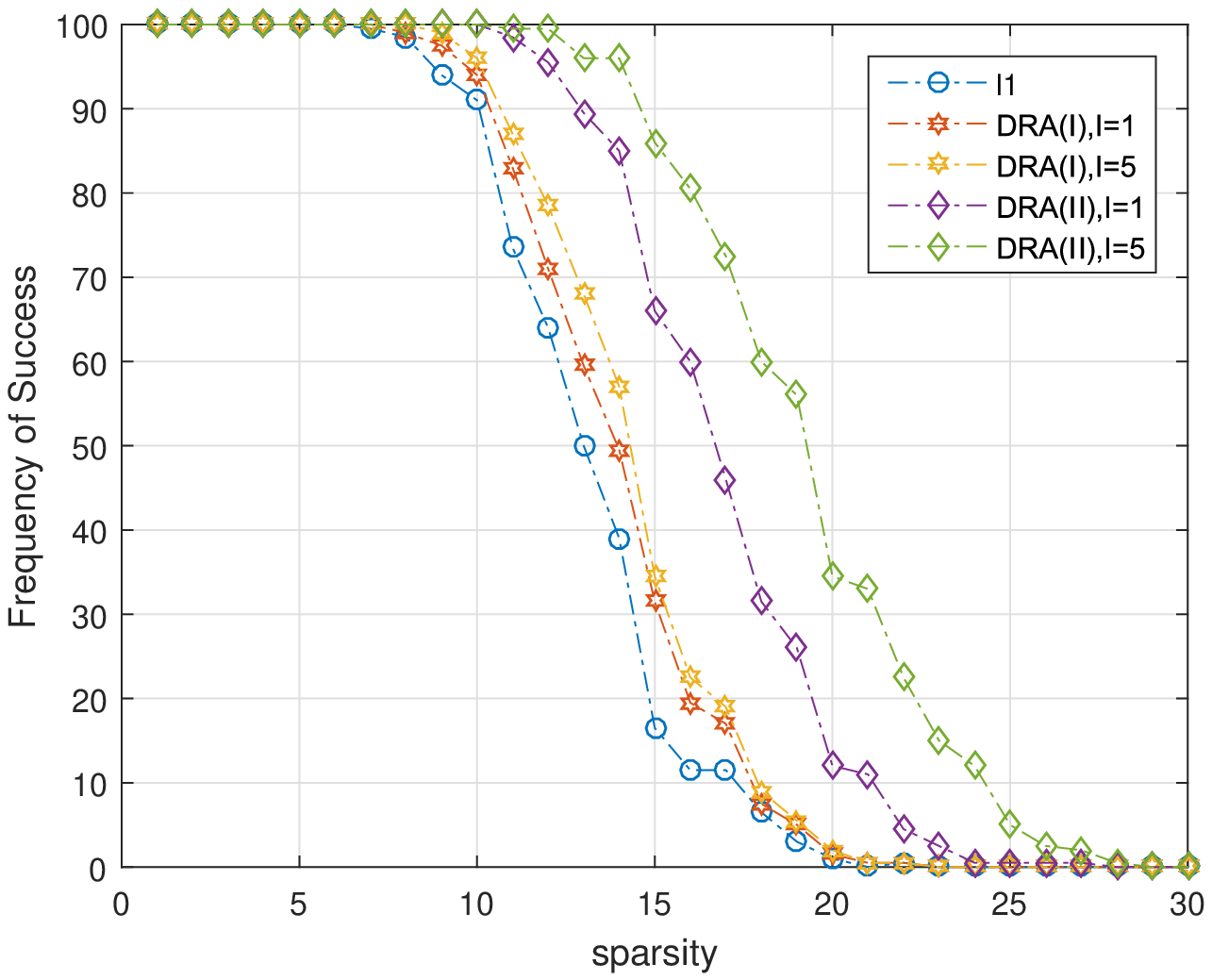}\caption*{(i) DRA(\MyRoman{1}) and DRA(\MyRoman{2})}
\end{minipage}
\begin{minipage}[t]{0.4\textwidth}
\centering
\includegraphics[width=4.5cm]{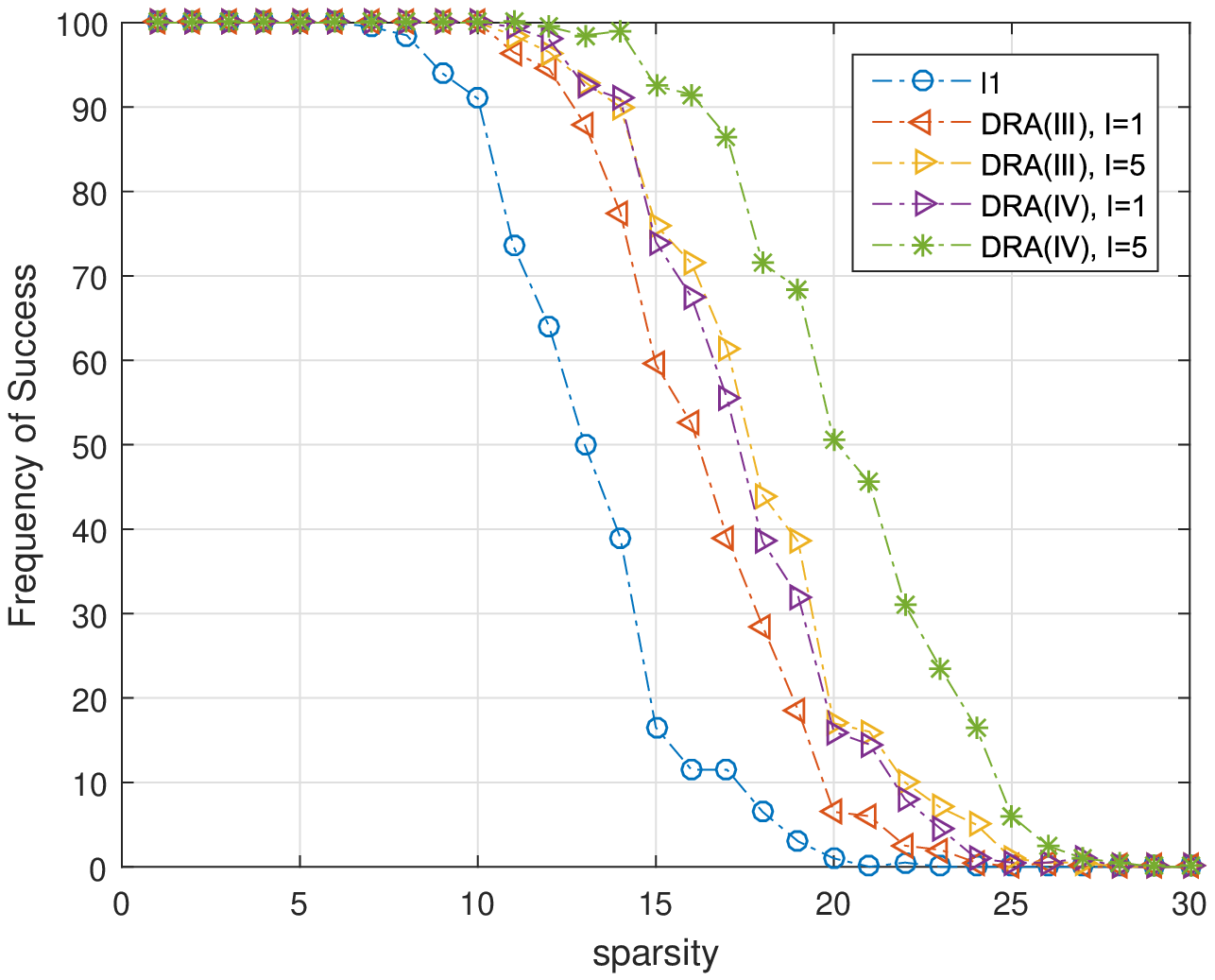}
\caption*{(ii) DRA(\MyRoman{3}) and DRA(\MyRoman{4})}
\end{minipage}
\begin{minipage}[t]{0.4\textwidth}
\centering
\includegraphics[width=4.5cm]{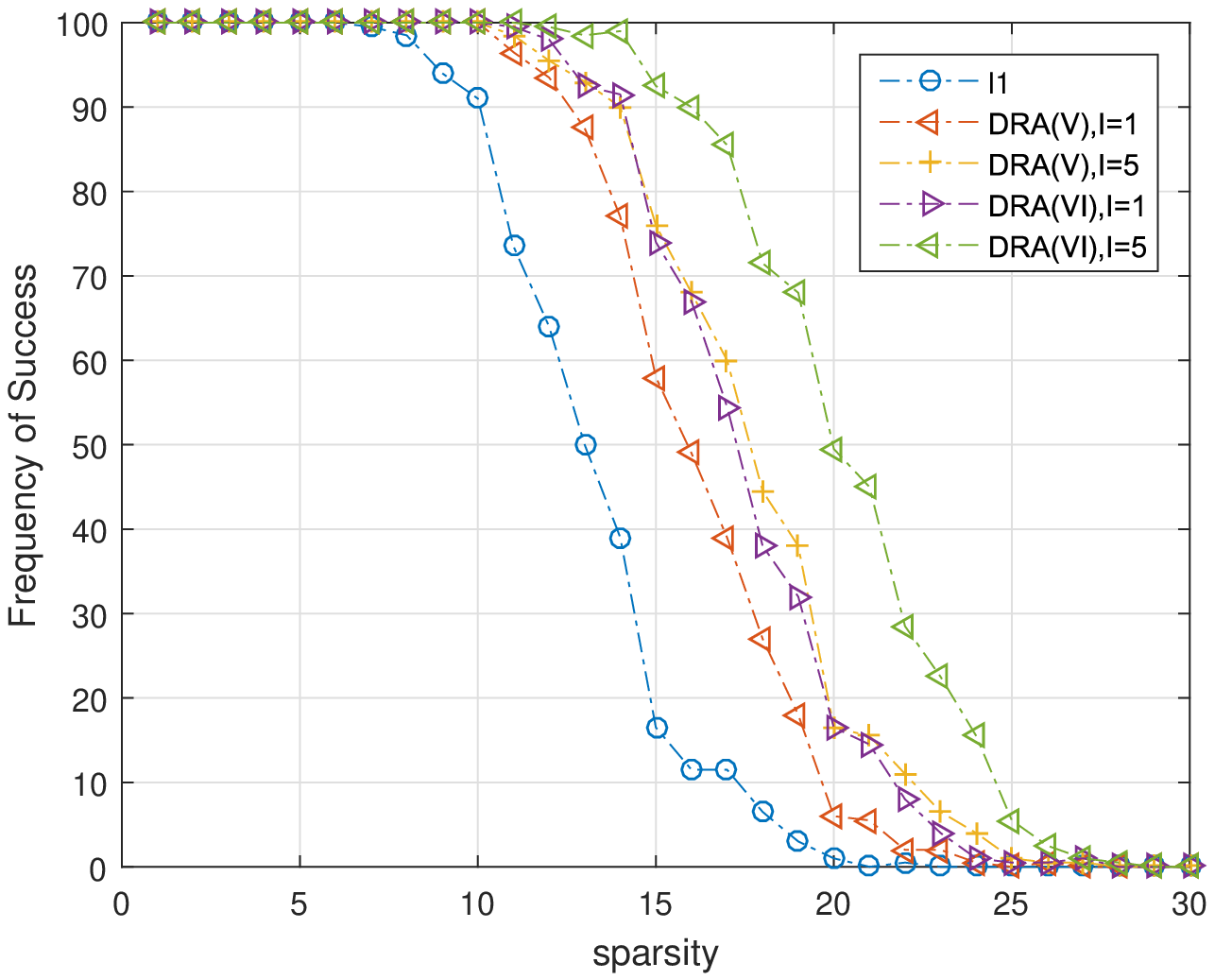}
\caption*{(iii) DRA(\MyRoman{5}) and DRA(\MyRoman{6})}
\end{minipage}
\begin{minipage}[t]{0.4\textwidth}
\centering
\includegraphics[width=4.5cm]{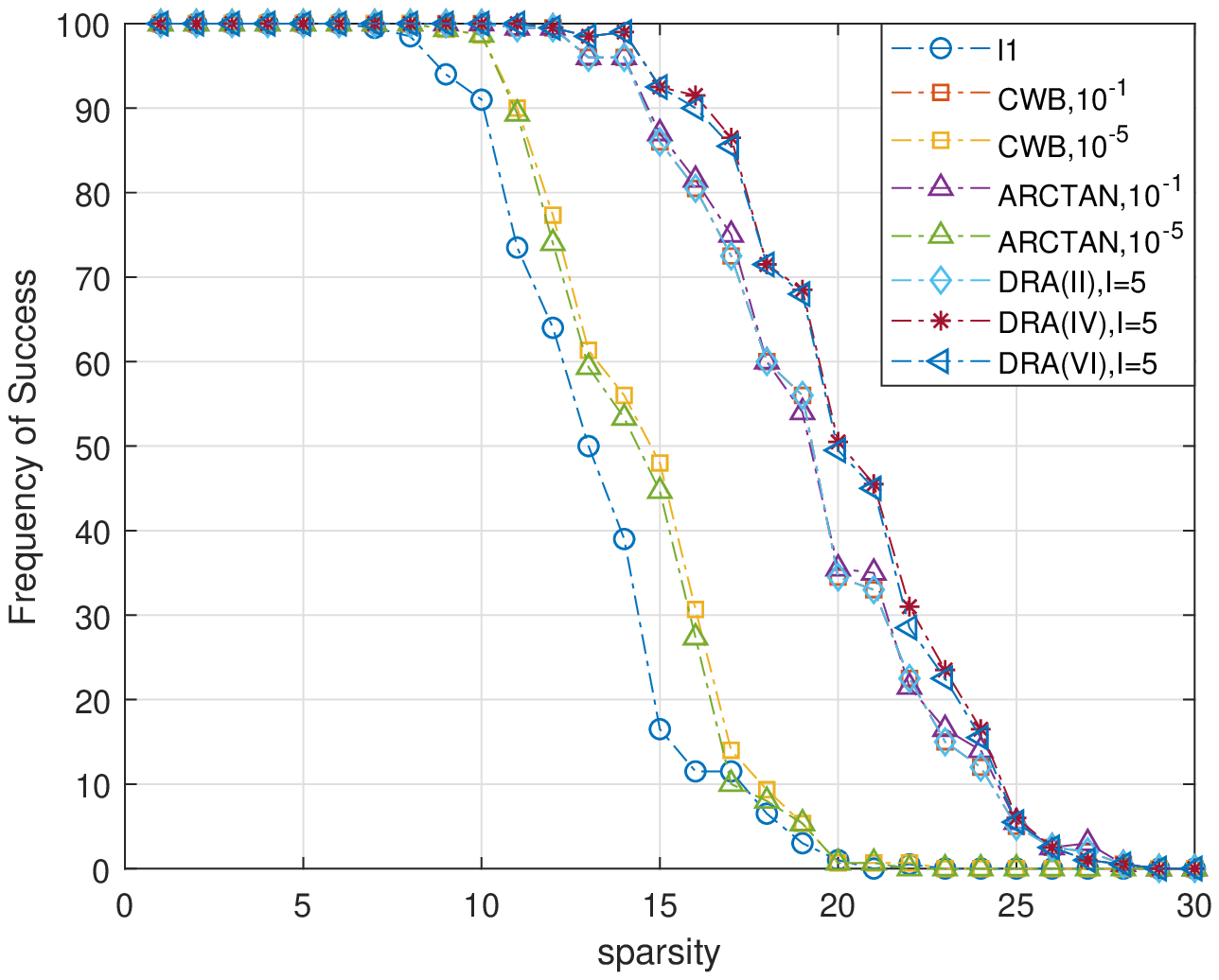}\caption*{(iv) CWB, ARCTAN }
\end{minipage}
\caption{(i)-(iii) Comparison of the performance of the dual-density-based reweighted $\ell_{1}$-algorithms by performing 1 iteration and 5 iterations respectively. (iv) Comparison of DRA and RA. }
\label{fig:15}
\end{figure}
Now we perform numerical experiments to show the behaviors of  the dual-density-based reweighted $\ell_{1}$-algorithms in two cases (N1) and (N2).
Note that in the case of (N1), the model \eqref{Pnew}  is reduced to the  sparse model  (C2). The numerical results  are given in Figure \ref{fig:15} (i)-(iii),
 Note that there are five legends in each figure (i)-(iii),  corresponding to $\ell_{1}$-minimization,  the dual-density-based reweighted $\ell_{1}$-algorithms with  one iteration or  five iterations. For instance, in (ii), we compare  DRA(\MyRoman{3}) and DRA(\MyRoman{4}) which all perform either one iteration or five iterations. For example, (DRA(\MyRoman{3}),1) and  (DRA(\MyRoman{3}),5) represent  DRA(\MyRoman{3}) with one iteration and five iterations, respectively.
 
It can be seen that the dual-density-based reweighted $\ell_{1}$-algorithms are performing better when the number of iteration is increased and all of them outperform  $\ell_{1}$-minimization in our experiment environment, while the performance of  DRA(\MyRoman{1}) with one or five iterations   is  similar  to  the performance of $\ell_{1}$-minimization. (i)-(iii) indicate the same phenomena: the  algorithms based on \eqref{polytope 2l} might achieve more  improvement  than the ones based on \eqref{polytope 1l} when the number of iteration is increased.
   For example, in (iii), the success rate of DRA(\MyRoman{6}) with five iterations has improved by nearly $25\%$ compared with  those with one iteration for each sparsity from $14$ to $20$, while DRA(\MyRoman{5}) has only  improved its performance by $10\%$ after increasing the number of iterations.
We filter the algorithms with the best performance from (i)-(iii) in Figure \ref{fig:15} and merge them into  (iv) together with CWB and ARCTAN in Figure \ref{fig:15}. It can be seen that    DRA(\MyRoman{4}) and DRA(\MyRoman{6}) outperform CWB  and ARCTAN, especially as $\varepsilon$ in CWB and ARCTAN is relatively small, and they also outperform the $\ell_{1}$-minimization as well.

\subsection{Case $(\mathrm{N2})$: }\label{case5}
\begin{figure}[H]
\centering
\begin{minipage}[t]{0.4\textwidth}
\centering
\includegraphics[width=4.5cm]{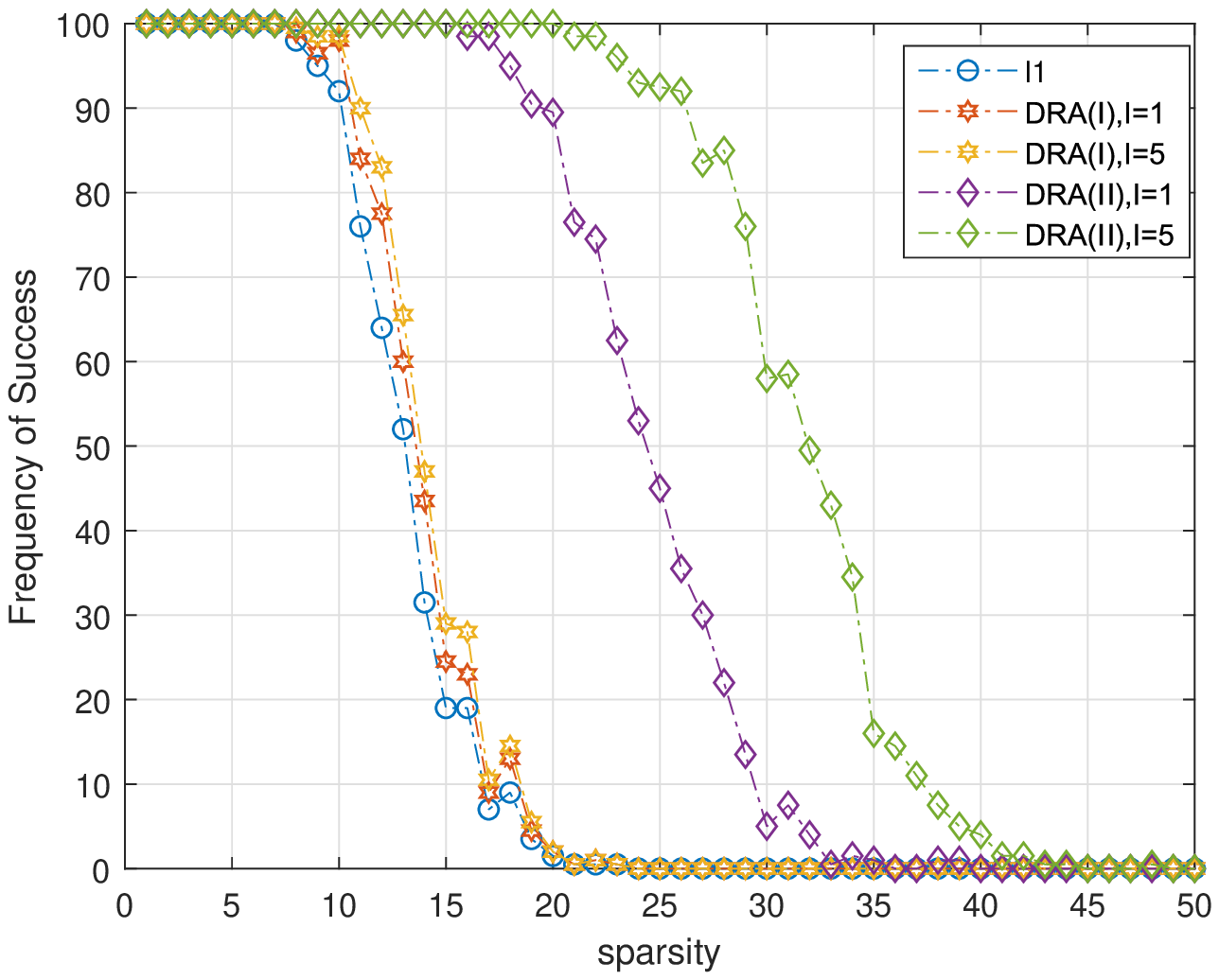}\caption*{(i) DRA(\MyRoman{1}) and DRA(\MyRoman{2})}
\end{minipage}
\begin{minipage}[t]{0.4\textwidth}
\centering
\includegraphics[width=4.5cm]{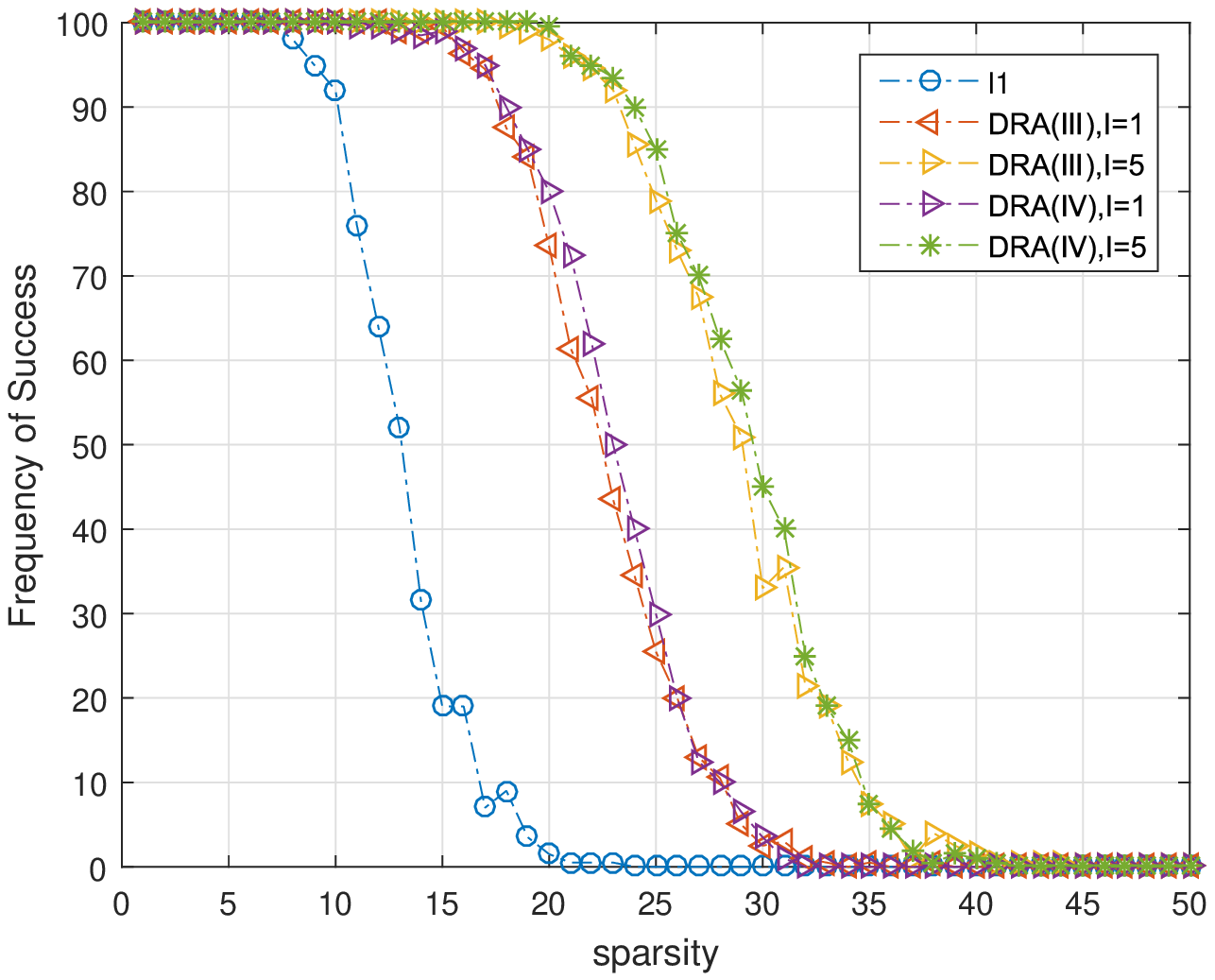}
\caption*{(ii) DRA(\MyRoman{3}) and DRA(\MyRoman{4})}
\end{minipage}
\begin{minipage}[t]{0.4\textwidth}
\centering
\includegraphics[width=4.5cm]{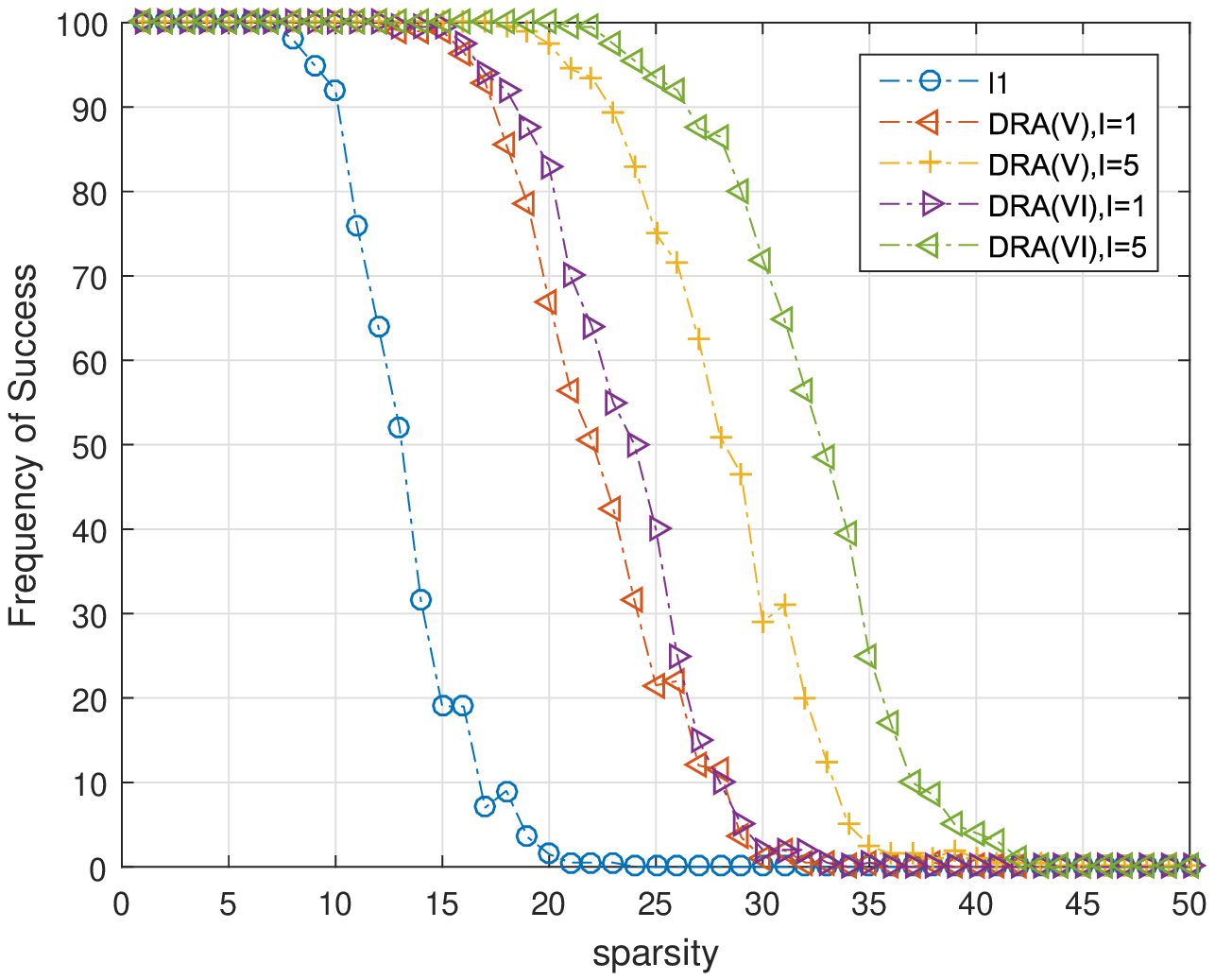}
\caption*{(iii) DRA(\MyRoman{5}) and DRA(\MyRoman{6})}
\end{minipage}
\begin{minipage}[t]{0.4\textwidth}
\centering
\includegraphics[width=4.5cm]{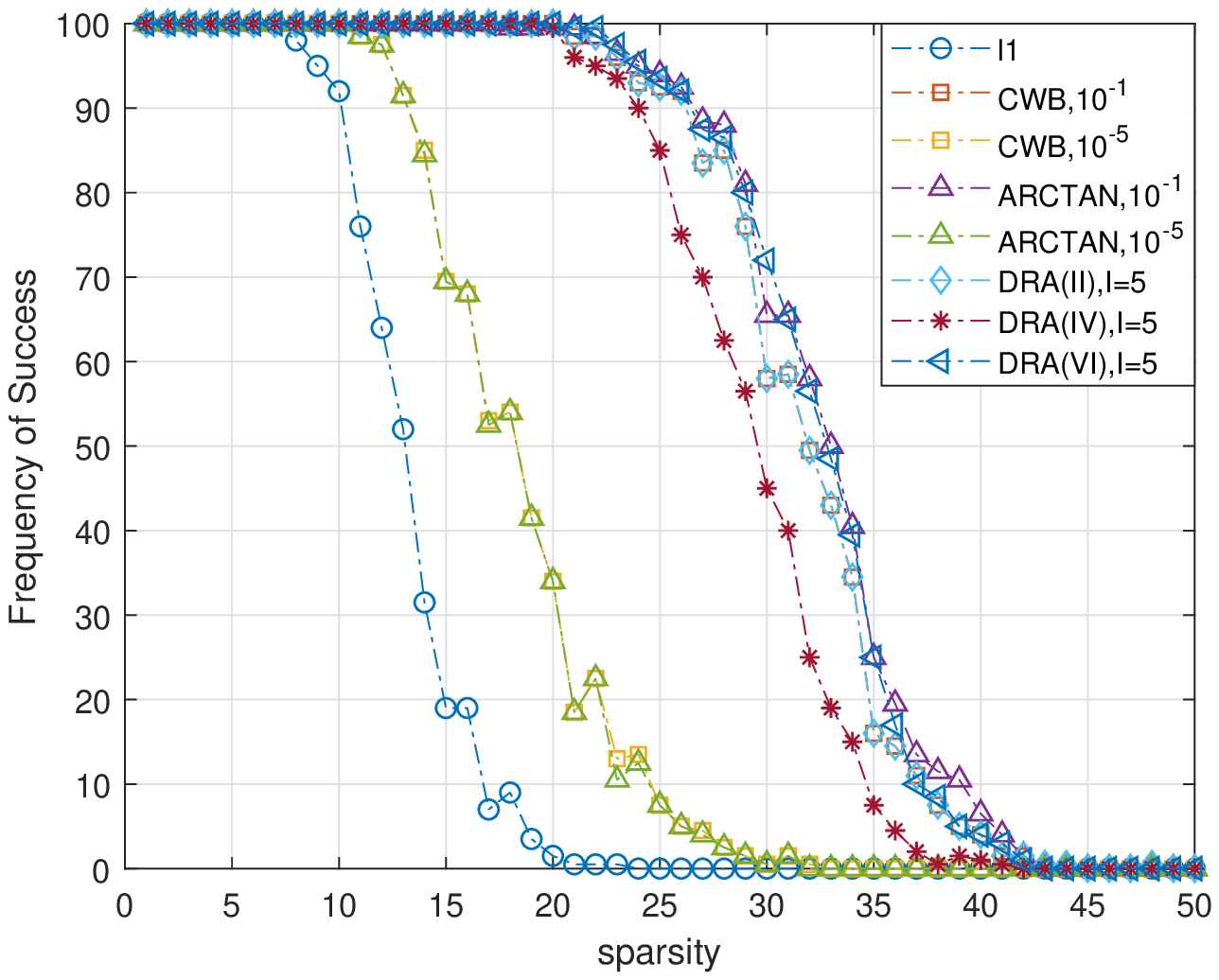}\caption*{(iv) CWB, ARCTAN }
\end{minipage}
\caption{(i)-(iii) Comparison of the performance of DRA  with one iteration and five iterations. (iv) Comparison of the performance of the DRA  and RA.}
\label{fig:23}
\end{figure}

Although the performance of ARCTAN and  DRA(\MyRoman{6}) is slightly better than that of DRA(\MyRoman{2}) and CWB in the case (N2), these  algorithms can compete to  each other in finding sparse vectors at high sparsity level in many situations. The other behaviors are similar to the case (N1).  We  compare the reweighted $\ell_{1}$-algorithms with updating rule \eqref{polytope 1l} and \eqref{polytope 2l}, which are shown in (i) and  (ii)  in Figure \ref{fig:24}, respectively. For the  algorithms using  \eqref{polytope 1l}, when executing 5 iterations, Figure \ref{fig:24} (i) shows that DRA(\MyRoman{3}) and DRA(\MyRoman{5}) perform much better than DRA(\MyRoman{1}).  For the  algorithms using  \eqref{polytope 2l}, when executing 5 iterations, Figure \ref{fig:24} (ii) indicates that the success rates of finding the sparse vectors in $T$ by DRA(\MyRoman{2}) and DRA(\MyRoman{6}) are very similar.
\begin{figure}[H]
\centering
\begin{minipage}[t]{0.4\textwidth}
\centering
\includegraphics[width=5cm]{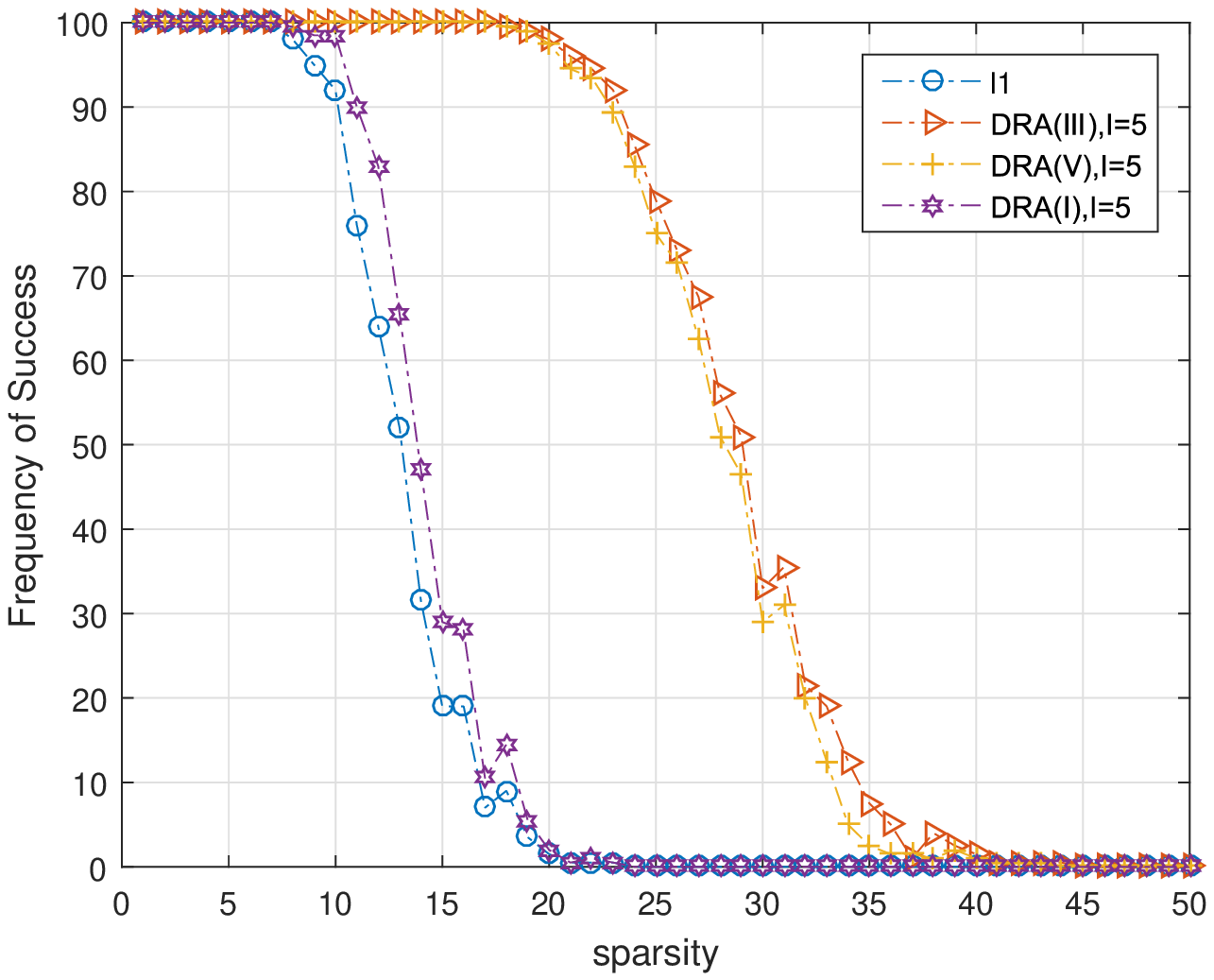}\caption*{ (i)  Algorithms with rule \eqref{polytope 1l}}
\end{minipage}
\begin{minipage}[t]{0.4\textwidth}
\centering
\includegraphics[width=5cm]{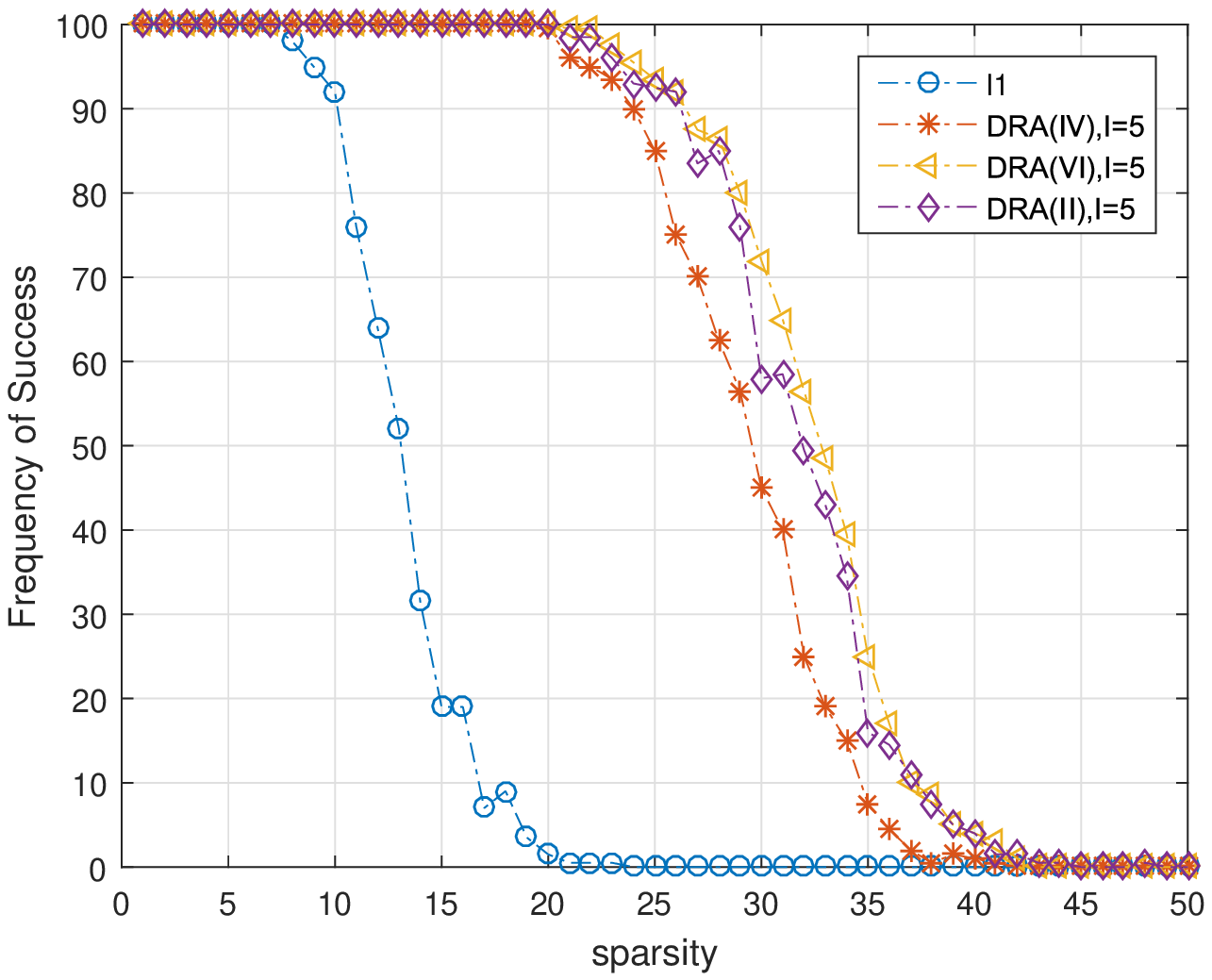}
\caption*{(ii) Algorithms with  rule \eqref{polytope 2l}}
\end{minipage}
\caption{ Comparison of the performance of DRA with \eqref{polytope 1l} or \eqref{polytope 2l}}
\label{fig:24}
\end{figure}
Finally, we carry out experiment to show  how the parameter $\varepsilon$ of merit functions affect the performance of locating the sparse vectors in $T$ by dual-density-based reweighted $\ell_{1}$-algorithms. In Figure \ref{fig:25}, some  numerical results for dual-density-based reweighted algorithms with different $\varepsilon$  indicate that the performance of the
DRA-typed algorithm is relatively insensitive to the choice of  small $\varepsilon$. Experiments reveals that when $\varepsilon \leq 10^{-10} $, the performance of CWB and ARCTAN are almost identical to that of  $\ell_{1}$-minimization, which is also observed in (iv) in Figures \ref{fig:15}  when $\varepsilon=10^{-5}$.

\begin{figure}[H]
\centering
\begin{minipage}[t]{0.4\textwidth}
\centering
\includegraphics[width=5cm]{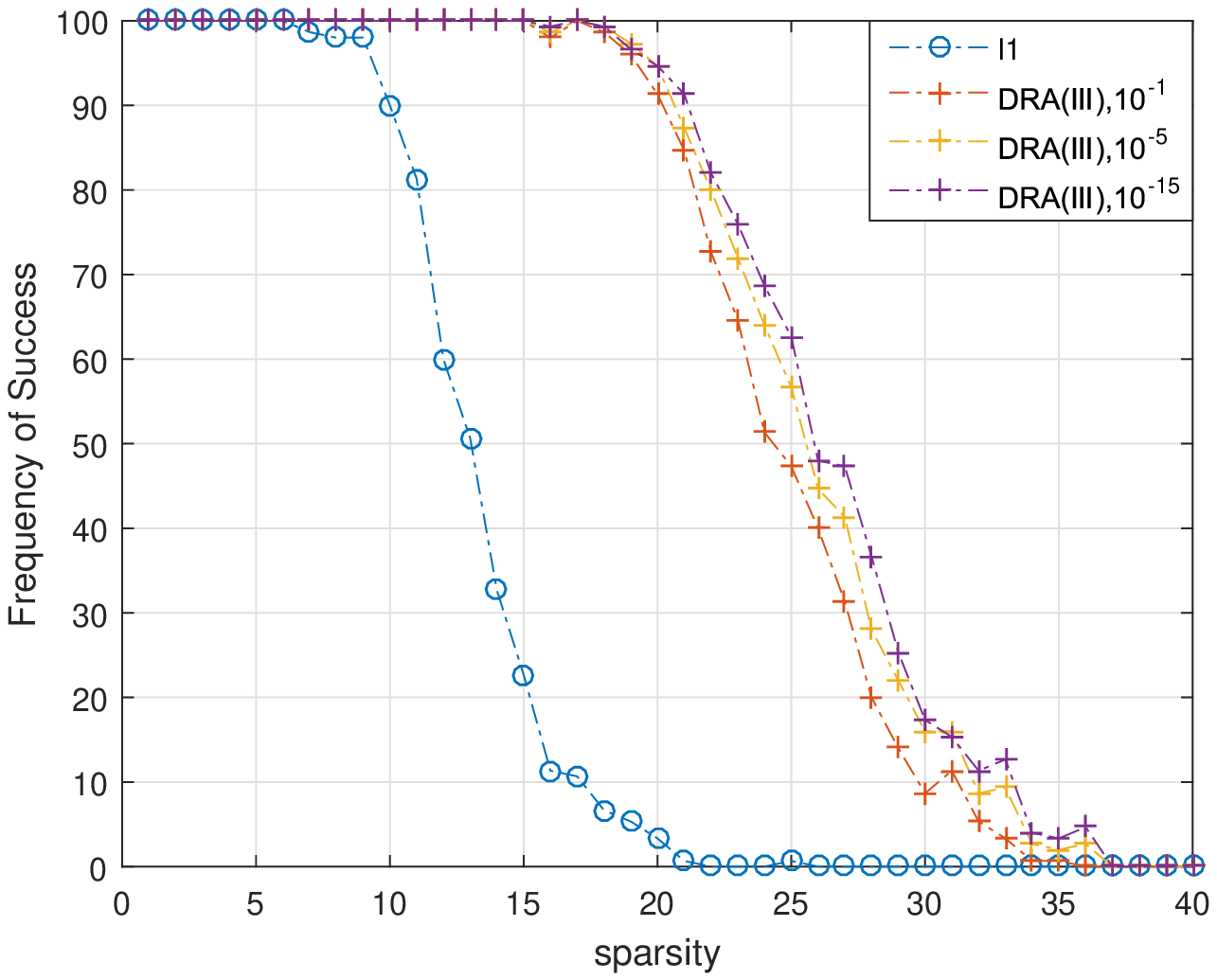}\caption*{ (i)  DRA(\MyRoman{3})}
\end{minipage}
\begin{minipage}[t]{0.4\textwidth}
\centering
\includegraphics[width=5cm]{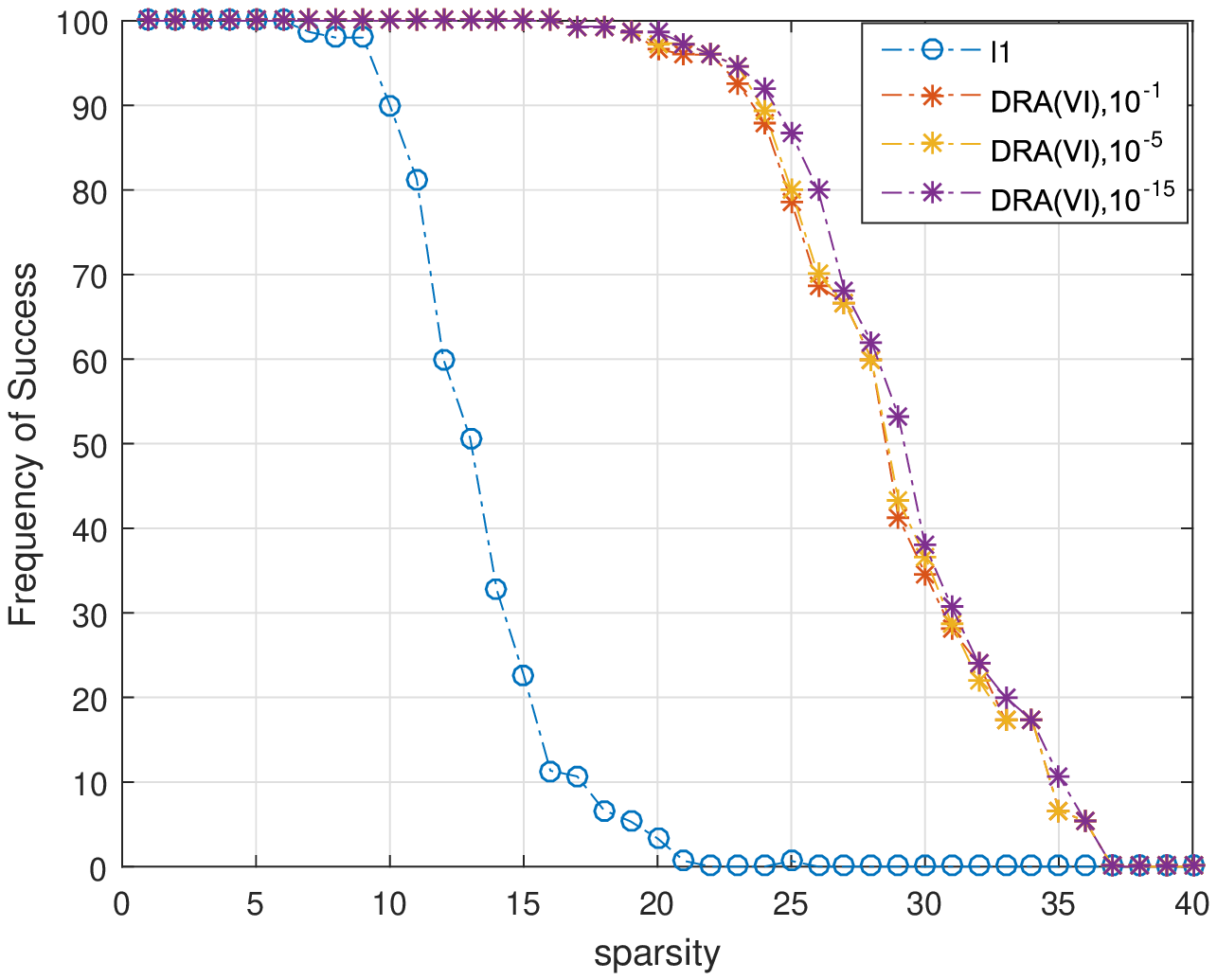}
\caption*{(ii) DRA(\MyRoman{6})}
\end{minipage}
\caption{ Comparison of the performance of DRA  with different  $\varepsilon$}
\label{fig:25}
\end{figure}

\section{Conclusions}

In this paper, we have studied  a class of algorithms for the   $\ell_{0}$-minimization problem \eqref{Pnew}. The one-step dual-density-based algorithms (DDA) and the dual-density-based reweighted $\ell_{1}$-algorithms (DRA) are developed. These algorithms are developed based on the new relaxation of the equivalent bilevel optimization of the underlying $\ell_0$-minimization problem. Unlike  RA,  the DRA can automatically generate an initial iterate instead of obtaining the initial iterate by solving $\ell_{1}$-minimization. Numerical experiments show that in some cases such as (N1) and (N2), the dual-density-based methods  proposed in this paper can perform  better than   $\ell_{1}$-minimization in solving the sparse optimization problem (\ref{Pnew}),  and can be  comparable to some existing reweighted $\ell_1$-methods. Although the experiments have shown that DRA-typed algorithms outperform $\ell_{1}$-minimization and some classic reweighted $\ell_{1}$-algorithms, there still exist some  future work to do. For example, the convergence and the stability of DRA-typed algorithms are worthwhile future work, which  might be investigated under certain assumptions such as the so-called restricted weak range space property (see, e.g., \cite{xuzhao2020}).

\end{document}